\theoremstyle{plain}
\newtheorem{lemma}[subsection]{Lemma}
\newtheorem{theorem}[subsection]{Theorem}
\newtheorem{proposition}[subsection]{Proposition}
\newtheorem{corollary}[subsection]{Corollary}
\theoremstyle{definition}
\newtheorem{example}[subsection]{Example}
\newtheorem{examples}[subsection]{Examples}
\newtheorem{definition}[subsection]{Definition}
\newtheorem{remark}[subsection]{Remark}
\newcommand{\noproof}{\hfill\qed}
\newcommand{\defn}{\textbf}
\newcommand{\comp}{\raisebox{0.2mm}{\ensuremath{\scriptstyle{\circ}}}}
\newcommand{\join}{\vee}
\newcommand{\tensor}{\diamond}
\renewcommand{\implies}{~$\Rightarrow$~}
\newcommand{\mus}[2]{\left\langle\begin{smallmatrix} #1 \\ #2 \end{smallmatrix}\right\rangle}
\newcommand{\muss}[3]{\left\langle\begin{smallmatrix} {#1} \\ {#2} \\ {#3}\end{smallmatrix}\right\rangle}
\DeclareMathOperator{\Ker}{Ker}
\renewcommand{\S}{\ensuremath{\mathrm{S}}}
\newcommand{\CKL}{\ldbrack K,L\rdbrack}
\newcommand{\normal}{\ensuremath{\lhd}}
\newcommand{\protosplit}{\ensuremath{\LHD}}
\renewcommand{\k}{\ensuremath{\mathbf{k}}}
\renewcommand{\l}{\ensuremath{\mathbf{l}}}
\newcommand{\C}{\ensuremath{\mathcal{C}}}
\newcommand{\D}{\ensuremath{\mathcal{D}}}
\newcommand{\CC}{\ensuremath{\mathbb{C}}}
\newcommand{\DD}{\ensuremath{\mathbb{D}}}
\newcommand{\K}{\ensuremath{\mathbb{K}}}
\newcommand{\Gp}{\ensuremath{\mathsf{Gp}}}
\newcommand{\Leibniz}{\ensuremath{\mathsf{Leibniz}}}
\newcommand{\Lie}{\ensuremath{\mathsf{Lie}}}
\newcommand{\NARng}{\ensuremath{\mathsf{NARng}}}
\newcommand{\Pt}{\ensuremath{\mathsf{Pt}}}
\newcommand{\Poisson}{\ensuremath{\mathsf{Poisson}}}
\newcommand{\Rng}{\ensuremath{\mathsf{Rng}}}
\newcommand{\CRng}{\ensuremath{\mathsf{CRng}}}
\newcommand{\CI}[1]{{\rm (COI\,#1)}}
\newcommand{\NH}{{\rm (NH)}}
\newcommand{\SH}{{\rm (SH)}}
\def\pullback{
 \ar@{-}[]+R+<6pt,-1pt>;[]+RD+<6pt,-6pt>%
 \ar@{-}[]+D+<1pt,-6pt>;[]+RD+<6pt,-6pt>}
\def\pushout{%
 \ar@{-}[]+L+<-6pt,1pt>;[]+LU+<-6pt,6pt>%
 \ar@{-}[]+U+<-1pt,6pt>;[]+LU+<-6pt,6pt>}
\def\splitpullback{%
 \ar@{-}[]+R+<6pt,-.51ex>;[]+RD+<6pt,-6pt>%
 \ar@{-}[]+D+<.51ex,-6pt>;[]+RD+<6pt,-6pt>}
\def\skewpullback{%
 \ar@{-}[]+LD+<-6pt,-6pt>;[]+LDD+<-6pt,-15.5pt>%
 \ar@{-}[]+D+<-1pt,-6pt>;[]+LDD+<-6pt,-15.5pt>}
\begin{document}

\title{On the normality of Higgins commutators}

\author{Alan S.~Cigoli}
\author{James R.~A.~Gray}
\author{Tim Van~der Linden}

\email{alan.cigoli@unimi.it}
\email{jamesgray@sun.ac.za}
\email{tim.vanderlinden@uclouvain.be}

\thanks{The first author's research was partially supported by FSE, Regione Lombardia. He would like to thank the Institut de Recherche en Math\'ematique et Physique (IRMP) for its kind hospitality during his stay in Louvain-la-Neuve. The third author is a Research Associate of the Fonds de la Recherche Scientifique--FNRS. He would like to thank the University of South Africa for its kind hospitality during his stay in Johannesburg}

\address{Dipartimento di Matematica, Universit\`a degli Studi di Milano, Via Saldini 50, 20133 Milano, Italy}
\address{Mathematics Division, Department of Mathematical Sciences, Stellenbosch University, Private Bag X1, Matieland 7602, South Africa}
\address{Institut de Recherche en Math\'ematique et Physique, Universit\'e catholique de Louvain, chemin du cyclotron~2 bte~L7.01.02, 1348 Louvain-la-Neuve, Belgium}

\begin{abstract}
In a semi-abelian context, we study the condition \NH\ asking that \emph{Higgins commutators of normal subobjects are normal subobjects}. We provide examples of categories that do or do not satisfy this property. We~focus on the relationship with the \emph{Smith is Huq} condition \SH\ and characterise those semi-abelian categories in which both \NH\ and \SH\ hold in terms of reflection and preservation properties of the change of base functors of the fibration of points.
\end{abstract}

\keywords{Commutator, semi-abelian category, fibration of points}

\subjclass[2010]{08C05, 17A99, 18G50, 20F12}

\maketitle

\section*{Introduction}
The recent works~\cite{MM-NC, Actions} explain that the universal-algebraic commutator defined by Higgins in the context of varieties of $\Omega$-groups~\cite{Higgins} can be defined in an arbitrary semi-abelian category~\cite{Janelidze-Marki-Tholen}. In contrast with the commutator introduced by Huq~\cite{Huq}---already in a setting essentially equivalent to semi-abelian---which is defined for a pair of subobjects $K$, $L\leq X$ as a \emph{normal} subobject $[K,L]_X \normal X$, a priori the Higgins commutator $[K,L]\leq X$ is in general just a subobject of~$X$, even when $K$ and $L$ are normal subobjects of $X$. In fact the two are closely related, as the former is always the normal closure of the latter.

Since, in general, Huq and Higgins commutators do not coincide, their eventual coincidence, for normal subobjects, becomes a property that a semi-abelian category may or may not satisfy. This condition, which we will denote by \NH, was introduced by the first author in his Ph.D.~thesis~\cite{AlanThesis}. In this article we study the condition \NH\ as well as its relation with other categorical conditions. In addition we give examples, counterexamples and equivalent characterisations.

In Section~\ref{Section Preliminaries} we recall the definitions of the Huq and the Higgins commutator and some of their basic properties. In Section~\ref{Section (NH)} we explain that \NH\ holds if and only if, for any pair of (protosplit) normal subobjects of an object, these two commutators coincide (Theorem~\ref{Theorem Independence of surrounding object Huq}). It is well known that for a group~$G$, the commutator $[G,G]$ is a characteristic subgroup of $G$. In Section~\ref{Section characteristic subobjects} we show that this can be proved in a semi-abelian category satisfying \NH, if the definition of characteristic subobject from~\cite{CigoliMontoliCharSubobjects} is used. In Section~\ref{Categories of interest} we recall from the first author's Ph.D.~thesis~\cite{AlanThesis} that any \emph{category of interest} in the sense of~\cite{Orzech} satisfies \NH\ and we give a first example of a semi-abelian category which does not. In Section~\ref{(SH)} we compare \NH\ with the \emph{Smith is Huq} condition \SH\ considered in~\cite{MFVdL}, and show that the two are independent from each other. In Section~\ref{(NH) + (SH)} we characterise those categories which satisfy both \NH\ and \SH\ in terms of the fibration of points. In particular, Theorem~\ref{Theorem (SH) + (NH)} tells us that \SH~{\rm +}~\NH\ is equivalent to the condition that for any $f\colon{W\to Z}$ in $\C$, the change of base functor $f^{*}\colon \Pt_{Z}(\C)\to \Pt_{W}(\C)$ of the fibration of points preserves Huq commutators of pairs of normal subobjects. In fact, it suffices to have this condition for $W=0$, so \SH\ {\rm +} \NH\ holds if and only if the kernel functors $\Ker\colon \Pt_{Z}(\C)\to \C$ preserve Huq commutators of pairs of normal subobjects.


\section{Preliminaries} \label{Section Preliminaries}
Throughout this paper we assume that $\C$ is a semi-abelian category~\cite{Janelidze-Marki-Tholen, Borceux-Bourn}.
\begin{definition}\cite{Huq}
A pair of morphisms $f\colon A\to C$ and $g\colon B\to C$ is said to \defn{commute} or \defn{cooperate} if
there exists a (necessarily unique) morphism $\varphi$ making the diagram
\[
\xymatrix@!=3ex{
	& A \ar[dl]_-{\langle1_A,0\rangle} \ar [dr]^-{f} \\
	A\times B \ar@{.>}[rr]^-\varphi & & C \\
	& B \ar [ul]^-{\langle0,1_B\rangle} \ar[ur]_-{g}
}\]
commute.
\end{definition}
In this setting it can be seen that two morphisms commute if and only if their regular images commute (see for instance~\cite{Borceux-Bourn}). For this reason we will only define the Huq commutator for subobjects.
\begin{definition}\cite{Huq} \label{def: Huq.com}
For a pair of subobjects $k\colon K\rightarrowtail X$ and $l\colon L\rightarrowtail X$ of $X$ in~$\C$,
the Huq commutator is the smallest normal subobject $[K,L]_X \normal X$ such that the images of $k$ and $l$ commute in the quotient $X/[K,L]_X$.
\end{definition}
\noindent In this context~\cite{Bourn-Huq, Borceux-Bourn} it can be shown that Huq commutator of $k$ and $l$ always exists and can be constructed as the kernel of $q$ in the diagram
\[
\xymatrix@!=7ex{
& K \ar[dl]_-{\langle1_K,0\rangle} \ar @{.>} [d] \ar [dr]^-{k} \\
K\times L \ar @{.>} [r]^-\varphi & Q & X \ar @{.>>} [l]_-q & {[K,L]_X} \ar @{ |>.>} [l]_-{\ker (q)} \\
& L \ar [ul]^-{\langle0,1_L\rangle} \ar @{.>} [u] \ar[ur]_-l
}
\]
where $Q$ is the colimit of the square of solid arrows, or equivalently as the kernel of $q$ in the diagram
$$ \xymatrix@!{
 & K+L \ar [d] \ar @{->>} [r]^-{\bigl\langle\begin{smallmatrix} 1_K & 0\\
0 & 1_L \end{smallmatrix}\bigr\rangle} \ar [d]_-{\mus{k}{l}} & K\times L \ar[d]^{\varphi} \\
[K,L]_X \ar@{ |>->}[r]_-{\ker(q)} &X \ar@{->>} [r]_-{q} & Q \pushout
} $$
in which the right hand square is a pushout.

\begin{definition}\cite{MM-NC, Actions}
For a pair of subobjects $k\colon K\rightarrowtail X$ and $l\colon L\rightarrowtail X$ of an object $X$ in $\C$, the \defn{Higgins commutator} of $K$ and $L$ is the subobject $[K,L]\leq X$ constructed as in diagram
$$ \xymatrix@!=7ex{
K\tensor L \ar @{->>} [d] \ar @{ |>->}[r]^-{\kappa_{K,L}} & K+L \ar [d]^-{\mus{k}{l}} \\
[K,L] \ar @{ >->} [r] 	& X
} $$
where $\kappa_{K,L}$ is the kernel of $\bigl\langle\begin{smallmatrix} 1_K & 0\\
0 & 1_L \end{smallmatrix}\bigr\rangle\colon K+L\to K\times L$.
\end{definition}
The object $K\tensor L$ is the \defn{co-smash product}~\cite{Smash, MM-NC, HVdL} of $K$ and $L$ and the Higgins commutator is its regular image under the morphism $\mus{k}{l}\comp\kappa_{K,L}$. Note that the normal subobject $(K\tensor L) \normal (K+L)$ can also be seen as the Huq commutator of the coproduct inclusions $\iota_K\colon K \to K+L$ and $\iota_L\colon L \to K+L$.

\begin{examples}
In the category $\Gp$ of groups, $K\tensor L$ is generated by words $klk^{-1}l^{-1}$ as a normal subgroup of $K+L$, so that $[K,L]$ is the usual commutator of $K$ and~$L$ in $X$. In the category $\CRng$ of (non-unitary) commutative rings, $[K,L]=KL$.
\end{examples}

We recall:
\begin{lemma}\label{Lemma Independence of surrounding object Higgins}
Consider $K$, $L\leq X\leq Y$. Then the Higgins commutator of $K$ and~$L$, computed in $X$, coincides with the Higgins commutator of $K$ and $L$, computed in~$Y$.
\end{lemma}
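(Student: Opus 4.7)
The plan is to reduce this to the basic fact that in a regular category (and semi-abelian categories are regular), the regular image of a morphism is stable under postcomposition with a monomorphism: if $m$ is a mono and $f$ an arbitrary morphism, then the regular image of $m \comp f$ equals $m$ composed with the regular image of $f$.

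First I would unwind the definition. Writing $k_X\colon K\rightarrowtail X$, $l_X\colon L\rightarrowtail X$ for the inclusions into $X$ and $k_Y$, $l_Y$, $j\colon X\rightarrowtail Y$ for the corresponding inclusions into $Y$, we have $k_Y = j\comp k_X$ and $l_Y = j\comp l_X$. Consequently $\mus{k_Y}{l_Y} = j \comp \mus{k_X}{l_X}\colon K+L\to Y$, so the two defining composites
\[
K\tensor L\xrightarrow{\kappa_{K,L}} K+L\xrightarrow{\mus{k_X}{l_X}} X \qquad\text{and}\qquad K\tensor L\xrightarrow{\kappa_{K,L}} K+L\xrightarrow{\mus{k_Y}{l_Y}} Y
\]
differ precisely by postcomposition with the mono $j$.

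Next I would invoke the stability of image factorisations under postcomposition with monos. Let $K\tensor L\twoheadrightarrow [K,L]_X\rightarrowtail X$ be the regular-epi/mono factorisation of the first composite, so that $[K,L]_X\rightarrowtail X$ is the Higgins commutator computed in $X$. Composing with $j$ yields a factorisation
\[
K\tensor L \twoheadrightarrow [K,L]_X \xrightarrowtail{\;j\comp(-)\;} Y,
\]
in which the second arrow is still a monomorphism (composite of two monos). By uniqueness of the regular image of the second composite above, this must coincide with $K\tensor L\twoheadrightarrow [K,L]_Y\rightarrowtail Y$. Hence $[K,L]_Y$, viewed as a subobject of $Y$, equals $[K,L]_X$ viewed as a subobject of $Y$ via $j$; since $j$ is a mono, they agree as subobjects of $X$ as well.

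There is no real obstacle here: everything is formal once one recognises that the Higgins commutator is defined as a \emph{regular image}, and that regular images commute with postcomposition by monomorphisms in any regular category. The only small point worth noting is that the kernel $\kappa_{K,L}\colon K\tensor L\rightarrowtail K+L$ is computed intrinsically from $K$ and $L$ and does not depend on the surrounding object, so the entire dependence on the ambient object is concentrated in the map $\mus{k}{l}$, which is exactly where the above argument bites.
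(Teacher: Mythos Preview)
Your argument is correct and is essentially the same as the paper's proof, which consists of the single sentence ``This follows from uniqueness of regular images.'' You have simply spelled out in detail what that sentence encodes: that the two defining composites differ by postcomposition with the monomorphism $j\colon X\rightarrowtail Y$, and that uniqueness of the (regular epi, mono) factorisation forces the images to agree.
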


\begin{proof}
This follows from uniqueness of regular images.
\end{proof}
\begin{lemma}\label{Lemma Huq construction}
\emph{\cite[Proposition~5.7]{MM-NC}}
For $K$, $L\leq X$, the Huq commutator $[K,L]_X$ is the normal closure in $X$ of the Higgins commutator $[K,L]$.\
\end{lemma}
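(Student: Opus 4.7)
The plan is to prove the two inclusions $\overline{[K,L]}\le[K,L]_X$ and $[K,L]_X\le\overline{[K,L]}$, where $\overline{[K,L]}$ denotes the normal closure of the Higgins commutator in $X$. The first inclusion exploits the pushout square defining $Q$ in the construction of $[K,L]_X$; the second uses the universal (minimality) property in Definition \ref{def: Huq.com}.

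For the first inclusion, I would consider the composite $\mus{k}{l}\comp\kappa_{K,L}\colon K\tensor L\to X$, whose regular image is $[K,L]$ by definition of the Higgins commutator. Post-composing with the cokernel $q\colon X\twoheadrightarrow Q$ and using the commutativity of the pushout square yields $q\comp\mus{k}{l}\comp\kappa_{K,L}=\varphi\comp\bigl\langle\begin{smallmatrix}1_K & 0\\ 0 & 1_L\end{smallmatrix}\bigr\rangle\comp\kappa_{K,L}=0$. Hence $[K,L]$ factors through $\ker(q)=[K,L]_X$, and since $[K,L]_X\normal X$, its normal closure $\overline{[K,L]}$ is contained in $[K,L]_X$ as well.

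For the reverse inclusion, I would show the more general fact that whenever $N\normal X$ satisfies $[K,L]\le N$, the images of $K$ and $L$ cooperate in $X/N$; applying this with $N=\overline{[K,L]}$ then forces $[K,L]_X\le\overline{[K,L]}$ by minimality. Writing $\pi\colon X\twoheadrightarrow X/N$, the composition $\pi\comp\mus{k}{l}\comp\kappa_{K,L}$ is zero because its regular image is $\pi([K,L])\le\pi(N)=0$. Since $\C$ is semi-abelian, the regular epi $\bigl\langle\begin{smallmatrix}1_K & 0\\ 0 & 1_L\end{smallmatrix}\bigr\rangle\colon K+L\twoheadrightarrow K\times L$ is normal with kernel $\kappa_{K,L}$, so it is the cokernel of $\kappa_{K,L}$; thus $\pi\comp\mus{k}{l}$ factors as $\psi\comp\bigl\langle\begin{smallmatrix}1_K & 0\\ 0 & 1_L\end{smallmatrix}\bigr\rangle$ for some $\psi\colon K\times L\to X/N$. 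Precomposing $\psi$ with $\langle 1_K,0\rangle$ and $\langle 0,1_L\rangle$ recovers $\pi\comp k$ and $\pi\comp l$, exhibiting a cooperator for them; by the fact recalled earlier that morphisms commute iff their regular images do, the subobjects $\pi(K)$ and $\pi(L)$ commute in $X/N$, which is what was required.

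The main obstacle is the reverse inclusion, and within it the extraction of the cooperator $\psi$: it is essential here that $\bigl\langle\begin{smallmatrix}1_K & 0\\ 0 & 1_L\end{smallmatrix}\bigr\rangle$ is a \emph{normal} epimorphism with kernel precisely $\kappa_{K,L}$, a property specific to the semi-abelian setting. Once this universal property is available, the passage from the vanishing of $[K,L]$ in the quotient to the Huq-commutation of the images of $K$ and $L$ is immediate, and the minimality clause of Definition \ref{def: Huq.com} delivers the desired containment.
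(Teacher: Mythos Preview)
Your argument is correct, but it takes a longer route than the paper's. The paper observes in one stroke that $q$ is the cokernel of the inclusion $m\colon [K,L]\rightarrowtail X$: since $\bigl\langle\begin{smallmatrix}1_K & 0\\ 0 & 1_L\end{smallmatrix}\bigr\rangle$ is the cokernel of $\kappa_{K,L}$ and the right square is a pushout, $q$ is the cokernel of $\mus{k}{l}\comp\kappa_{K,L}=m\comp e$, hence of $m$ because $e$ is epi. Then $[K,L]_X=\ker(q)=\ker(\mathrm{coker}(m))$ is by definition the normal closure of $[K,L]$ in $X$, and both inclusions fall out at once.

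Your version instead argues each inclusion separately and, for the harder one, goes back to the minimality clause of Definition~\ref{def: Huq.com}, explicitly constructing a cooperator in $X/N$ whenever $N\normal X$ contains $[K,L]$. This is perfectly sound and perhaps more transparent for a reader who wants to see the cooperating morphism appear; the paper's approach is shorter because it packages that same construction inside the categorical identity ``normal closure $=$ kernel of cokernel'' together with the stability of cokernels under pushout. Both rely on the same key semi-abelian fact you highlight, namely that $\bigl\langle\begin{smallmatrix}1_K & 0\\ 0 & 1_L\end{smallmatrix}\bigr\rangle$ is the cokernel of its kernel $\kappa_{K,L}$.
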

\begin{proof}
Let $k\colon K\rightarrowtail X$ and $l\colon L\rightarrowtail X$ be subobjects of $X$. Consider the diagram
\[
\xymatrix{
K\tensor L \ar@{ |>->}[r]^-{\kappa_{K,L}}\ar@{->>}[d]_{e} & K+L \ar@{->>}[r]^-{\left\langle\begin{smallmatrix} 1_K & 0\\
0 & 1_L \end{smallmatrix}\right\rangle} \ar[d]^{\mus{k}{l}}& K\times L\ar[d]^{\varphi}\\
[K,L] \ar@{ >->}[r]^-{m} \ar@{ >.>}[d]& X \ar@{->>}[r]_-{q} & Q\pushout\\
[K,L]_X \ar@{ |>->}[ur]_{\ker (q)} &
}
\]
where $m$ is the image of the morphism $\mus{k}{l}$.
Since $\left\langle\begin{smallmatrix} 1_K & 0\\
0 & 1_L \end{smallmatrix}\right\rangle$ is the cokernel of~$\kappa_{K,L}$ and the square on the right is a pushout, it follows that $q$ is the cokernel of $m\comp e$. Since $e$ is an epimorphism, $q$ is also the cokernel of $m$. It immediately follows that the Huq commutator $[K,L]_X$ is the normal closure of $[K,L]$ in~$X$.
\end{proof}

Since in a semi-abelian category the regular image of a normal subobject is normal, if $\mus{k}{l}$ is a regular epimorphism (i.e.~when $X=K\join L$), then the two commutators coincide.

As a consequence of Theorem~\ref{teo: hi.id}, we will see that, for normal subobjects, the two commutators coincide in every \emph{category of interest} in the sense of G.~Orzech~\cite{Orzech}, such as $\Gp$ or the category $\Rng$ of (non-unitary) rings and $R$-$\Lie$ of Lie algebras over a fixed ring $R$. However, Examples \ref{Example Axiom (CI7)} and \ref{Example Axiom (CI8)} show that for an arbitrary semi-abelian category, even those which are closely related to categories of interest, the two commutators need not coincide for abitrary pairs of normal subobjects.

The Higgins commutator can also be used to characterise normal monomorphisms.

\pagebreak
\begin{lemma}\label{Lemma Normal iff commutator factors through}
\emph{\cite[Theorem~6.3]{MM-NC}} A subobject $K\leq X$ is normal in $X$ if and only if $[K,X]\leq K$.\noproof
\end{lemma}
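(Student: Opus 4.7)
I proceed by proving the two implications separately. The forward direction is the easier one: assuming $K\normal X$ with cokernel $q\colon X\to X/K$, we have $q\comp k=0$, so that $q\comp\mus{k}{1_X}=\mus{0}{q}$. By functoriality of the co-smash product applied to the pair $0\colon K\to 0$ and $q\colon X\to X/K$, the composite $\mus{0}{q}\comp\kappa_{K,X}\colon K\tensor X\to X/K$ factors through $0\tensor (X/K)$, which vanishes (since $0+Y\to 0\times Y$ is an isomorphism for any $Y$). Therefore $q$ annihilates the regular image $[K,X]$ of $\mus{k}{1_X}\comp\kappa_{K,X}$, whence $[K,X]\leq\ker(q)=K$.

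For the converse, suppose $[K,X]\leq K$. My strategy is to prove the stronger general fact that the normal closure $\overline{K}$ of $K$ in $X$ always satisfies $\overline{K}=K\vee[K,X]$; under our hypothesis the right-hand side collapses to $K$, forcing $K$ to be normal. One inclusion is free: since $\overline{K}$ is normal, the forward direction gives $[\overline{K},X]\leq\overline{K}$, and monotonicity of the Higgins commutator in the first variable (a consequence of the functoriality of $\tensor$) yields $[K,X]\leq[\overline{K},X]\leq\overline{K}$; together with $K\leq\overline{K}$ this gives $K\vee[K,X]\leq\overline{K}$. The other inclusion requires showing that $K\vee[K,X]$ is itself normal in $X$; a natural route is to combine the pushout construction of the Huq commutator $[K,X]_X$ recalled in Section~\ref{Section Preliminaries} with Lemma~\ref{Lemma Huq construction}, which identifies $[K,X]_X$ with the normal closure of $[K,X]$, and then to compare with the cokernel of $k\colon K\to X$ viewed simply as an arrow.

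The main obstacle is precisely establishing normality of $K\vee[K,X]$ without circularly presupposing normality of $K$ itself. A careful colimit computation should reduce this to the observation that, by the hypothesis $[K,X]\leq K$, the composite $\mus{k}{1_X}\comp\kappa_{K,X}$ factors through $K$, which allows one to assemble the required normal subobject from the pushout used in the definition of $[K,X]_X$. If this step proves recalcitrant, an alternative is to work directly with the cokernel $q\colon X\to X/\!\!/K:=\mathrm{coker}(k)$ of $k$ as an arrow: one shows that the Higgins commutator $[K,X]$ generates (together with $K$) the kernel of $q$, so that the hypothesis $[K,X]\leq K$ collapses $\ker(q)$ onto $K$ and thereby forces $K$ to be normal.
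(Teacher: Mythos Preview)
The paper does not actually prove this lemma: it is stated with a citation to \cite[Theorem~6.3]{MM-NC} and closed immediately with \verb|\noproof|, and the sentence following it points to the stronger formula in \cite[Lemma~4.9]{Actions}. So there is no argument in the paper to compare against; one can only assess your attempt on its own merits.

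Your forward implication is fine: the functoriality of the co-smash product together with $0\tensor(X/K)=0$ indeed forces $q$ to kill $[K,X]$, hence $[K,X]\leq\ker(q)=K$.

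The converse, however, has a genuine gap. You correctly isolate the key claim $\overline{K}=K\vee[K,X]$ (equivalently, that $K\vee[K,X]$ is normal in $X$), but neither of your two sketches proves it. The first sketch invokes the pushout defining $[K,X]_X$ and Lemma~\ref{Lemma Huq construction}; but Lemma~\ref{Lemma Huq construction} computes the normal closure of $[K,X]$, not of $K$, and in fact already gives $[K,X]_X=[K,X]$ here (since $K\vee X=X$ makes $\mus{k}{1_X}$ a regular epimorphism), which is information about $[K,X]$ being normal, not about $K$. The second sketch simply restates the desired identity $\ker(\mathrm{coker}(k))=K\vee[K,X]$ without indicating why it holds. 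The standard route, used in \cite{MM-NC} and \cite{Actions}, is different: one exploits the split short exact sequence
\[
0 \longrightarrow K\tensor X \longrightarrow X\flat K \longrightarrow K \longrightarrow 0
\]
(with $X\flat K=\ker(\mus{0}{1_X}\colon K+X\to X)$ and section $\iota_K$) together with the characterisation of normality saying that $k$ is normal precisely when the restriction of $\mus{k}{1_X}$ to $X\flat K$ factors through $k$. Since $(K\tensor X)+K\to X\flat K$ is a regular epimorphism, the image of that restriction in $X$ is exactly $[K,X]\vee K$; the hypothesis $[K,X]\leq K$ then forces the factorisation through $k$, and this is the step that is missing from your argument.
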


In fact, a more general version of this result holds: Lemma~4.9 in~\cite{Actions}.


\section{The condition \NH} \label{Section (NH)}
In general, as explained above, Huq and Higgins commutators need not coincide; in other words, Higgins commutators need not be normal. In some categories, though, the Huq commutator of a pair of normal monomorphisms will always coincide with its Higgins commutator. In this section we focus on equivalent formulations of this condition.

\begin{definition}\label{Definition (NH)}
A semi-abelian category satisfies the condition \defn{Normality of Higgins commutators (NH)}  when, for each pair of normal subobjects $K$, $L\normal X$, the Higgins commutator $[K,L]\leq X$ is normal in $X$.
\end{definition}

It is easy to check that the category of groups satisfies \NH. By contrast, the following example shows that, for arbitrary (non-normal) subgroups $K$ and $L$ of $X$, the commutator $[K,L]$ need not be a normal subgroup of $X$.

\begin{example}\cite[Example~5.3.9]{AlanThesis}
Let $A_5$ be the simple group of even permutations of order five. For the two subgroups
$K=\langle (12)(34) \rangle$ and $L=\langle (12)(45)\rangle$, we have $[K,L]=\langle (345)\rangle\neq[K,L]_{A_5}=A_5$ its normalisation.
\end{example}

Under~\NH, Higgins commutators and Huq commutators of normal subobjects coincide. 

\begin{lemma}\label{Lemma Higgins = Huq}
In a semi-abelian category with \NH,
\[
K, L\normal X \quad\Rightarrow\quad [K,L]_{X}=[K,L].
\]
\end{lemma}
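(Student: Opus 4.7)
The plan is essentially to combine two facts already available in the excerpt. By Lemma~\ref{Lemma Huq construction}, for arbitrary subobjects $K,L\leq X$ the Huq commutator $[K,L]_{X}$ is the normal closure of the Higgins commutator $[K,L]$ in $X$. On the other hand, the hypothesis \NH\ applied to the pair of normal subobjects $K$, $L\normal X$ tells us that $[K,L]$ is already a normal subobject of $X$.

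From there the conclusion is immediate: a normal subobject is its own normal closure, so the normal closure of $[K,L]$ in $X$ coincides with $[K,L]$ itself. Combining with the first observation gives $[K,L]_{X}=[K,L]$, as required.

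There is no real obstacle here; the statement is essentially a reformulation of the definition of \NH\ once Lemma~\ref{Lemma Huq construction} is in hand. The only point worth being careful about is that Lemma~\ref{Lemma Huq construction} is stated for arbitrary subobjects (not only normal ones), which is exactly what we need to feed $[K,L]$ into the normal-closure description. The writeup will therefore consist of one short paragraph recalling these two ingredients and concluding.
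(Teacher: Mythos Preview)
Your proposal is correct and is essentially identical to the paper's own proof: the paper also invokes Lemma~\ref{Lemma Huq construction} to identify $[K,L]_X$ as the normal closure of $[K,L]$, and then observes that under \NH\ the Higgins commutator is already normal, hence equal to its normal closure.
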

\begin{proof}
By Lemma~\ref{Lemma Huq construction} the Huq commutator $[K,L]_{X}$ is the normal closure of $[K,L]$ in $X$. Hence if $[K,L]$ is already normal in $X$, then the two commutators will coincide.
\end{proof}

\begin{lemma}\label{Lemma When factors through subobject}
In a semi-abelian category with \NH, if $K$, $L\leq X\leq Y$ and $K$, $L\normal Y$, then
\[
[K,L]_{X}=[K,L]_{Y}.
\]
\end{lemma}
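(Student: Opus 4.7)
The plan is to exploit the fact that the Higgins commutator is intrinsic (independent of the ambient object), combined with \NH\ at the level of $Y$, to identify both $[K,L]_{X}$ and $[K,L]_{Y}$ with the single subobject $C=[K,L]$ of $K\join L\leq X\leq Y$. First I would invoke Lemma~\ref{Lemma Independence of surrounding object Higgins} in order to treat $C$ unambiguously as a subobject of $X$ (and a fortiori of $Y$). Applying \NH\ in $Y$ to the pair $K$, $L\normal Y$ then makes $C$ normal in $Y$, and Lemma~\ref{Lemma Higgins = Huq} gives $[K,L]_{Y}=C$.

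The crucial step is to upgrade normality of $C$ in $Y$ to normality of $C$ in $X$. I would use the standard observation that a normal subobject of $Y$ lying inside a subobject $X\leq Y$ is automatically normal in $X$: writing $q\colon Y\twoheadrightarrow Y/C$ for the cokernel of $C\hookrightarrow Y$, the kernel of the restriction of $q$ to $X$ equals $X\meet C=C$, so $C$ is exhibited as a kernel in $X$. Equivalently, one could argue via Lemma~\ref{Lemma Normal iff commutator factors through} using monotonicity of the Higgins commutator in its arguments: from $C\normal Y$ one has $[C,X]\leq[C,Y]\leq C$, so $C\normal X$.

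To conclude, Lemma~\ref{Lemma Huq construction} identifies $[K,L]_{X}$ with the normal closure of $C$ in $X$; since $C$ is now already normal in $X$, this closure collapses to $C$ itself, giving $[K,L]_{X}=C=[K,L]_{Y}$. The potentially delicate point is the passage from normality of $C$ in $Y$ to normality in $X$, but the kernel computation above makes this routine in the semi-abelian setting, and no further input from \NH\ is required at that step.
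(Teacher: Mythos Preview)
Your argument is correct and is essentially the paper's proof: both reduce $[K,L]_{X}$ and $[K,L]_{Y}$ to the single Higgins commutator $[K,L]$ via Lemma~\ref{Lemma Independence of surrounding object Higgins} together with Lemma~\ref{Lemma Huq construction}/Lemma~\ref{Lemma Higgins = Huq}. The only cosmetic difference is that the paper invokes Lemma~\ref{Lemma Higgins = Huq} directly at $X$ as well (tacitly using that $K$, $L\normal Y$ with $K$, $L\leq X$ forces $K$, $L\normal X$), whereas you apply \NH\ once at $Y$ and then transfer normality of $[K,L]$ itself down to $X$; the same ``normality descends to subobjects'' observation is the extra ingredient in either version.
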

\begin{proof}
If $\C$ satisfies \NH, then it follows from Lemma~\ref{Lemma Higgins = Huq} that both commutators $[K,L]_{X}$ and $[K,L]_{Y}$ coincide with the Higgins commutator $[K,L]$, which is independent of the object in which it is computed by Lemma~\ref{Lemma Independence of surrounding object Higgins}. 
\end{proof}

\begin{lemma}\label{Lemma Reduction to protosplit mono}
Given $K\normal X\normal Y$, consider the induced diagram
\[
\xymatrix{& K \ar@{{ >}->}[rd] \ar@{{ |>}->}[d]_-{k} \\
0 \ar[r] & X \ar@{{ |>}->}[r]_-{\langle x,0\rangle} \ar@{=}[d] & Y\times_{Y/X}Y \splitpullback \ar@<.5ex>[d]^(.6){\pi_{1}} \ar@<.5ex>[r]^-{\pi_{2}} & Y \ar@{-{ >>}}[d] \ar@<.5ex>[l] \ar[r] & 0 \\
0 \ar[r] & X \ar@{{ |>}->}[r]_{x} & Y \ar@{-{ >>}}[r] & Y/X \ar[r] & 0.}
\]
If $K\normal Y$ then $K\normal (Y\times_{Y/X}Y)$.
\end{lemma}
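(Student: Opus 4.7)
My plan is to realise $K$, embedded in $P=Y\times_{Y/X}Y$ via $\langle x,0\rangle\comp k$, as the kernel of a suitable morphism out of $P$; since kernels are normal, this will immediately yield $K\normal P$.

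First, using $K\normal Y$, I form the cokernel $p\colon Y\to Y/K$. Since $K\leq X\normal Y$, the composite $X\rightarrowtail Y\to Y/K$ factors as $X\to X/K\rightarrowtail Y/K$, where $X/K\normal Y/K$, and the cokernel of this normal monomorphism is canonically identified with $Y/X$. This yields a regular epimorphism $p'\colon Y/K\to Y/X$ satisfying $p'\comp p=q$, where $q\colon Y\to Y/X$ is the cokernel of $X\rightarrowtail Y$, and with $\ker(p')=X/K$.

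Next, I form the pullback $P'=(Y/K)\times_{Y/X}Y$ along $p'$ and $q$. Because $p'\comp p\comp\pi_{1}=q\comp\pi_{1}=q\comp\pi_{2}$, the universal property of pullbacks induces a morphism $\phi\colon P\to P'$ with $\pi_{1}'\comp\phi=p\comp\pi_{1}$ and $\pi_{2}'\comp\phi=\pi_{2}$. Comparing $\phi$ with the second projections produces the diagram
\[
\xymatrix@C=2.5em@R=2em{
0 \ar[r] & X \ar@{->>}[d] \ar@{{ |>}->}[r]^-{\langle x,0\rangle} & P \ar[d]^-{\phi} \ar@<.5ex>[r]^-{\pi_{2}} & Y \ar@{=}[d] \ar@<.5ex>[l] \ar[r] & 0 \\
0 \ar[r] & X/K \ar@{{ |>}->}[r] & P' \ar@<.5ex>[r] & Y \ar@<.5ex>[l] \ar[r] & 0
}
\]
of split short exact sequences (the bottom row being split by $\langle p,1_{Y}\rangle$), in which the left-hand vertical arrow is the quotient with kernel $K$.

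Finally, because the right-hand vertical map is the identity, a short diagram chase (or the snake lemma) identifies $\ker(\phi)$ with $\ker(X\to X/K)=K$, yielding the desired $K\normal P$. The only step that requires real care is the construction of $p'\colon Y/K\to Y/X$ with $\ker(p')=X/K$; this follows from the exactness of the cokernel functor in the semi-abelian category $\C$ applied to the tower $K\normal X\normal Y$. All remaining verifications are routine diagram chases that I do not expect to cause trouble.
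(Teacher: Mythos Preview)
Your proposal is correct and follows essentially the same approach as the paper: both factor $q\colon Y\to Y/X$ through $Y/K$, form the intermediate pullback $P'=(Y/K)\times_{Y/X}Y$ (the paper calls it $Q$), and identify $K$ as the kernel of the induced map $P\to P'$. The only difference is presentational---the paper phrases this as a decomposition of the pullback square into two pullbacks and reads off the kernel directly, whereas you verify the kernel via the morphism of split short exact sequences; the underlying argument is the same.
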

\begin{proof}
Since $K\normal Y$, the right hand side pullback square decomposes into a composite of pullbacks:
\[
\xymatrix{Y\times_{Y/X}Y \pullback \ar@{-{ >>}}[r] \ar@{-{ >>}}[d] & Q \pullback \ar@{-{ >>}}[r] \ar@{-{ >>}}[d] & Y \ar@{-{ >>}}[d]\\
Y \ar@{-{ >>}}[r] & Y/K \ar@{-{ >>}}[r] & Y/X}
\]
It is clear that $K$ is the kernel of ${Y\times_{Y/X}Y \to Q}$.
\end{proof}
Recall the well known fact:
\begin{lemma}\label{Lifting split extensions}
Consider a split extension as in bottom row of the diagram
\[
\xymatrix{0 \ar@{.>}[r] & K \ar@{{ |>}->}[d]_-{k} \ar@{.>}[r] & K' \ar@{.>}[d] \ar@{.>}@<.5ex>[r] & Z \ar@{:}[d] \ar@{.>}@<.5ex>[l] \ar@{.>}[r] & 0\\
0 \ar[r] & X \ar[r]_-{x} & Y \ar@<.5ex>[r]^-{f} & Z \ar@<.5ex>[l]^-{s} \ar[r] & 0}
\]
such that $x\comp k$ is normal. Then this split extension lifts along $k\colon K\to X$ to yield a normal monomorphism of split extensions.
\end{lemma}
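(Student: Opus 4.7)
My plan is to realise the lifted split extension as the kernel, taken in the fibre $\Pt_{Z}(\C)$, of a natural quotient morphism of split extensions. Since $x\comp k$ is normal in $Y$, let $p\colon Y\to Y/K$ denote its cokernel. As $K$ factors through $X=\Ker(f)$, the morphism $f$ factors through $p$ as some $f'\colon Y/K\to Z$; moreover $\bar s := p\comp s$ satisfies $f'\comp \bar s=f\comp s=1_{Z}$, so $\bar s$ is a split monomorphism. Thus $(X/K,Y/K,f',\bar s)$ lies in $\Pt_{Z}(\C)$, and the canonical projection defines a morphism $q\colon (X,Y,f,s)\to(X/K,Y/K,f',\bar s)$ in this fibre.

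Next I would compute the kernel of $q$ in the semi-abelian category $\Pt_{Z}(\C)$ as the pullback of $q$ along the zero object $(0,Z,1_{Z},1_{Z})$. Pointwise, this produces $K=\Ker(X\to X/K)$ together with $K' := Y\times_{Y/K} Z$. The induced morphism $K'\to Y$ is monic, being the pullback of the split mono $\bar s$; the induced map $K'\to Z$ is a regular epi, being the pullback of the regular epi $p$; and its kernel equals $\Ker(p)=K$. The required section $Z\to K'$ is produced from the pair $(s,1_{Z})$ via the universal property of the pullback, using the identity $\bar s=p\comp s$, and the left-hand square of the lifted diagram then commutes by construction.

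Because the lifted extension is obtained as a kernel in $\Pt_{Z}(\C)$, the resulting morphism of split extensions is by definition a normal monomorphism in that fibre, which I take to be the meaning of ``normal monomorphism of split extensions''. The only delicate step is verifying that the $X$-component of the kernel in $\Pt_{Z}(\C)$ really reproduces the given $k\colon K\to X$; this reduces to the fact that $p$ is the cokernel of $x\comp k$, so that $K$ is recovered both as $\Ker(p)$ and as the pullback $X\times_{X/K} 0$. The remainder of the argument is then routine bookkeeping confirming that the pullback squares are compatible with the split structures.
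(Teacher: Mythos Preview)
Your proof is correct and arrives at the same object $K'$ as the paper, but by a different construction. The paper works via the \emph{denormalisation}: it forms the kernel pair $R=Y\times_{Y/K}Y$ of the cokernel of $x\comp k$, observes that $\langle r_{1},r_{2}\rangle\colon R\to Y\times Y$ is a normal monomorphism in $\Pt_{Y}(\C)$ (since its kernel part is the normal mono $x\comp k$), and then applies the change-of-base functor $s^{*}\colon \Pt_{Y}(\C)\to \Pt_{Z}(\C)$, which preserves kernels. You instead stay entirely inside $\Pt_{Z}(\C)$: you note that the cokernel $p\colon Y\to Y/K$ is itself a morphism of points over~$Z$, and take its kernel there, obtaining $K'=Y\times_{Y/K}Z$ directly. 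Unravelling the paper's iterated pullback shows it produces exactly the same $K'$, so the two arguments agree on the nose at the level of objects. Your route is slightly more economical---no denormalisation, no passage through $\Pt_{Y}(\C)$---and makes the normality of the resulting morphism of split extensions immediate, since it is visibly a kernel in $\Pt_{Z}(\C)$. The paper's route, on the other hand, foreshadows the change-of-base viewpoint exploited later in Theorem~\ref{Theorem (SH) + (NH)}.
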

\begin{proof}
The needed lifting is obtained via the pullback of split extensions in the diagram
\[
\xymatrix@!0@R=3.5em@C=5em{
& K \skewpullback \ar@{=}[dl] \ar@{{ |>}->}[ddd]|(.33){\hole}_(.6){k} \ar@{{ |>}.>}[rr] && K' \skewpullback \ar[ld] \ar@{.>}[ddd] \ar@{.>}@<0.5ex>[rr] & & Z \skewpullback \ar@{:}[ddd] \ar[dl]_{s} \ar@{.>}@<0.5ex>[ll] \\
K \ar@{{ |>}->}[rr] \ar@{{ |>}->}[ddd]_{x\circ k} & & R \ar@<0.5ex>[rr]^(0.75){r_{1}} \ar[ddd]_(.4){\langle r_{1},r_{2}\rangle } & & Y \ar@<0.5ex>[ll] \ar@{=}[ddd] &\\
& & & & &\\
& X \ar@{{ |>}->}[rr]_(.25){x}|-{\hole} \ar@{{ |>}->}[dl]^(.4){x} & & Y\ar@<0.5ex>[rr]^(.25){f}|-{\hole}\ar[dl]^(.3){\langle s\circ f,1_{Y}\rangle}& & Z \ar@<0.5ex>[ll]^(.75){s}|-{\hole} \ar[dl]^(.4){s}\\
Y \ar@{{ |>}->}[rr]_{\langle 0,1_{Y}\rangle} & & Y\times Y \ar@<0.5ex>[rr]^{\pi_1} & & Y \ar@<0.5ex>[ll]^{\langle 1_{Y},1_{Y}\rangle}}
\]
where $R$ is the denormalisation~\cite{Bourn2000, Borceux-Bourn} of $x\comp k$.
\end{proof}

\begin{definition}
\cite{BJK2} A morphism $\xymatrix@1{K \ar@{ |>->}[r] & X}$ is called a \defn{protosplit (normal) monomorphism} if and only if it is the kernel of a split epimorphism. We will use the notation $K \protosplit X$ to indicate that $K\leq X$ is a \defn{protosplit normal subobject} of $X$, i.e.~its representing monomorphisms are protosplit normal.
\end{definition}

In what follows we shall consider the diagram
\begin{equation}\label{Cospan in points}\tag{$\dagger$}
\vcenter{\xymatrix{0 \ar[r] & K \ar@{{ |>}->}[d]_-{k} \ar[r] & K' \ar[d]^-{k'} \ar@<.5ex>[r] & Z \ar@{=}[d] \ar@<.5ex>[l] \ar[r] & 0\\
0 \ar[r] & X \ar[r]_-{x} & Y \ar@<.5ex>[r]^-{f} & Z \ar@<.5ex>[l]^-{s} \ar[r] & 0\\
0 \ar[r] & L \ar@{{ |>}->}[u]^-{l} \ar[r] & L' \ar[u]_-{l'} \ar@<.5ex>[r] & Z \ar@{=}[u] \ar@<.5ex>[l] \ar[r] & 0}}
\end{equation}
where $k'$ and $l'$ are normal monomorphisms in $\Pt_Z(\C)$.

\begin{theorem}\label{Theorem Independence of surrounding object Huq}
For a semi-abelian category $\C$, the following are equivalent:
\begin{enumerate}
\item $\C$ satisfies \NH;
\item for all $K$, $L\leq X\leq Y$, if $K$, $L\normal Y$, then $[K,L]_{X}=[K,L]_{Y}$;
\item for all $K$, $L\leq X\normal Y$, if $K$, $L\normal Y$, then $[K,L]_{X}=[K,L]_{Y}$;
\item for all $K$, $L\leq X\protosplit Y$, if $K$, $L\normal Y$ then $[K,L]_{X}=[K,L]_{Y}$;
\item for all $K$, $L\leq X\normal Y$, if $K$, $L\normal Y$ and $X=K\join L$, then $[K,L]_{X}=[K,L]_{Y}$;
\item for all $K$, $L\leq X\protosplit Y$, if $K$, $L\normal Y$ and $X=K\join L$, then $[K,L]_{X}=[K,L]_{Y}$;
\item for any diagram \eqref{Cospan in points} there exists a normal subobject $N\normal Y$ in $\Pt_{Z}(\C)$ such that $\Ker(N\normal Y)=[K,L]_X \normal X$;
\item for any diagram \eqref{Cospan in points} such that $X=K\join L$, there exists a normal subobject $N\normal Y$ in $\Pt_{Z}(\C)$ such that $\Ker(N\normal Y)=[K,L]_X \normal X$.
\end{enumerate}
\end{theorem}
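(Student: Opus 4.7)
The plan is to prove the cycle $(\text{i}) \Rightarrow (\text{ii}) \Rightarrow (\text{iii}) \Rightarrow (\text{iv}) \Rightarrow (\text{vi}) \Rightarrow (\text{i})$, then place $(\text{v})$ in the cycle via the trivial implications $(\text{iii}) \Rightarrow (\text{v}) \Rightarrow (\text{vi})$, and finally to bridge $(\text{iv})$ with $(\text{vii})$ and $(\text{vi})$ with $(\text{viii})$. The implication $(\text{i}) \Rightarrow (\text{ii})$ is Lemma~\ref{Lemma When factors through subobject}, while $(\text{ii}) \Rightarrow (\text{iii}) \Rightarrow (\text{iv}) \Rightarrow (\text{vi})$ and $(\text{iii}) \Rightarrow (\text{v}) \Rightarrow (\text{vi})$ are immediate, as each successive condition merely adds restrictions on the data.

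The key non-trivial step is $(\text{vi}) \Rightarrow (\text{i})$. Given $K$, $L \normal X$, I first observe that the Higgins commutator $[K,L]$ is already normal inside $K\join L$: it is the regular image of the normal subobject $K\tensor L \normal K+L$ along the regular epimorphism $\mus{k}{l}\colon K+L \to K\join L$, and in a semi-abelian category the regular image of a normal subobject is normal. Setting $X' := K\join L$, which is normal in $X$, I would form the denormalisation $Y := X\times_{X/X'} X$, so that $X' \protosplit Y$. Lemma~\ref{Lemma Reduction to protosplit mono}, applied with $K$, $X'$, $X$ in the roles of $K$, $X$, $Y$, then yields $K \normal Y$, and similarly $L \normal Y$. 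Condition $(\text{vi})$ now gives $[K,L]_{X'} = [K,L]_Y$; the preliminary observation identifies the left-hand side with $[K,L]$, so $[K,L] \normal Y$, whence $[K,L] \normal X$ since $[K,L] \leq X \leq Y$.

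The equivalences $(\text{iv}) \Leftrightarrow (\text{vii})$ and $(\text{vi}) \Leftrightarrow (\text{viii})$ rest on Lemma~\ref{Lifting split extensions}, which for a protosplit inclusion $X \protosplit Y$ matches normal subobjects of $X$ that are also normal in $Y$ with normal subobjects of $Y$ in $\Pt_Z(\C)$. For $(\text{iv}) \Rightarrow (\text{vii})$: given the diagram~\eqref{Cospan in points}, condition $(\text{iv})$ forces $[K,L]_X = [K,L]_Y$, so $[K,L]_X$ is normal in $Y$ and Lemma~\ref{Lifting split extensions} lifts it to the required $N \normal Y$ in $\Pt_Z(\C)$. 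Conversely, given the hypotheses of $(\text{iv})$, Lemma~\ref{Lifting split extensions} first produces a diagram~\eqref{Cospan in points} from $K$, $L \normal Y$; then $(\text{vii})$ yields $N \normal Y$ in $\Pt_Z(\C)$ with $\Ker(N) = [K,L]_X$, and the lemma read backwards forces $[K,L]_X \normal Y$. This gives $[K,L]_Y \leq [K,L]_X$, and combined with the always-true reverse inequality $[K,L]_X \leq [K,L]_Y$ (since $[K,L]_Y \cap X$ is normal in $X$ and contains the Higgins commutator $[K,L]$) yields equality. The equivalence $(\text{vi}) \Leftrightarrow (\text{viii})$ follows the same pattern under the extra hypothesis $X = K\join L$.

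I expect the main obstacle to be the step $(\text{vi}) \Rightarrow (\text{i})$: one must recognise that the denormalisation $X\times_{X/X'} X$ is precisely calibrated to turn the hypothesis $K$, $L \normal X' \normal X$ into the protosplit configuration demanded by $(\text{vi})$, and then descend the resulting normality of $[K,L]$ in the denormalisation back to normality in the original $X$. A secondary delicate point is that, unlike in the category of groups, one cannot shortcut the bridges to $(\text{vii})$ and $(\text{viii})$ by simply observing that a normal monomorphism in $\Pt_Z(\C)$ has normal underlying monomorphism in $\C$: this fails in general, so the lifting correspondence given by Lemma~\ref{Lifting split extensions} must be invoked in both directions rather than being taken for granted.
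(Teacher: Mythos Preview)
Your overall strategy matches the paper's: the same cycle through (i)--(vi), the same use of Lemma~\ref{Lemma Reduction to protosplit mono} to pass from the normal to the protosplit case, and the same translation via Lemma~\ref{Lifting split extensions} for (vii) and (viii). The organisation differs only cosmetically: the paper proves $(\text{v}) \Rightarrow (\text{i})$ and $(\text{vi}) \Rightarrow (\text{v})$ separately, while you fold them into a single $(\text{vi}) \Rightarrow (\text{i})$.

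There is, however, a genuine gap in your descent step. Having shown $[K,L] \normal Y$ with $Y = X\times_{X/X'}X$, you write ``whence $[K,L] \normal X$ since $[K,L] \leq X \leq Y$.'' But the inclusion of $[K,L]$ into $Y$ is the composite $[K,L] \rightarrowtail X' \xrightarrow{\langle x',0\rangle} Y$, and this does \emph{not} factor through any monomorphism $X \rightarrowtail Y$. The only natural embedding of $X$ into $Y$ is the diagonal $\langle 1_X,1_X\rangle$, and $\langle x',0\rangle$ lands in the diagonal only at $0$. So the ``normal in a larger object, hence normal in an intermediate one'' argument does not apply here.

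The paper's fix is to push forward rather than restrict: the projection $\pi_1\colon Y \to X$ is a (split) regular epimorphism, and $\pi_1 \comp \langle x',0\rangle$ is the inclusion $x'\colon X' \to X$. Since regular images of normal monomorphisms are normal in a semi-abelian category, the image of $[K,L] \normal Y$ under $\pi_1$---namely $[K,L] \rightarrowtail X$---is normal in $X$. This is exactly the argument in the paper's proof of $(\text{vi}) \Rightarrow (\text{v})$, and it repairs your step with no change to the surrounding structure.
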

\begin{proof}
By Lemma~\ref{Lemma When factors through subobject} (i) implies (ii). It is also clear that (ii) implies (iii), (iii) implies (iv), (iii) implies (v) and (v) implies (vi). Assuming now that (v) holds, since $[K,L]_{K\join L}=[K,L]$ as explained above, (i) follows from the fact that since $K\join L \rightarrowtail Y$ is the join of two normal monomorphisms it is normal~\cite{Borceux-Semiab, Huq}.

By Lemma~\ref{Lemma Reduction to protosplit mono}, condition (vi) implies (v) as follows. Assuming that $K$ and $L$ are normal in $Y$, the lemma gives us $K$, $L\normal (Y\times_{Y/X}Y)$. Hence by the assumption~(vi) and the fact that $\langle x,0\rangle\colon X\to Y\times_{Y/X}Y$ is a protosplit normal monomorphism, we have $[K,L]_{X}=[K,L]_{Y\times_{Y/X}Y}$. Consider the diagram
\[
\xymatrix@=8ex{
& K\tensor L \ar@{ |>->}[rr]^{\kappa_{K,L}}\ar@{->>}[d]_{e} & & K+L \ar[d]\ar@/^8ex/[dd]^{\mus{x\circ k}{x\circ l}}\ar@{->>}[dl]_{\mus{k}{l}}\\
&[K,L] \ar@{ |>->}[r]_-{m}& X \ar@{ |>->}[r]_-{\langle x,0\rangle} \ar[dr]_{x} & Y\times_{Y/X} Y \ar@{->>}[d]^{\pi_1}\\
& & & Y.
}
\]
Since $X=K\join L$ it follows that $[K,L]_X=[K,L]$ and so $[K,L]=[K,L]_{Y \times_{Y/X} Y}$ is normal in $Y \times_{Y/X} Y$. Since the image of a normal monomorphism is normal it follows that $x\comp m$ is normal in $Y$. Therefore since $[K,L]_Y \normal Y$ is the normal closure of $[K,L] \leq Y$ it follows that $[K,L]_Y=[K,L]=[K,L]_X$ as required. Finally we note that (vii) is equivalent to (iv) and (viii) is equivalent to (vi) since they are simple reformulations obtained using Lemma~\ref{Lifting split extensions} and the fact that for a morphism of split extensions
\[
\xymatrix{0 \ar[r] & K \ar@{{ |>}->}[d]_-{k} \ar[r] & K' \ar[d]^{k'} \ar@<.5ex>[r] & Z \ar@{=}[d] \ar@<.5ex>[l] \ar[r] & 0\\
0 \ar[r] & X \ar[r]_-{x} & Y \ar@<.5ex>[r]^-{f} & Z \ar@<.5ex>[l]^-{s} \ar[r] & 0,}
\]
the monomorphism $k'$ considered as a morphism in $\Pt_{Z}(\C)$ is normal if and only if~$x\comp k$ is normal in~$\C$.
\end{proof}


\section{Characteristic subobjects and \NH}\label{Section characteristic subobjects}

In the category of groups it is well known that for each group $G$ the commutator $[G,G]$ is a characteristic subgroup of $G$. It is not difficult to see that a subgroup~$S$ of~$G$ is characteristic if and only if for each group $B$, every action $B\flat G \to G$, defined with respect to the monad $B\flat (-)$ (see~\cite{BJK}, \cite{BJK2} and~\cite{Bourn-Janelidze:Semidirect}), restricts to an action ${B\flat S \to S}$. This description was used by the first author and A.~Montoli in~\cite{CigoliMontoliCharSubobjects} as a definition of characteristic subobject in an arbitrary semi-abelian category. In this section we will give alternative characterisations of characteristic subobjects and then show that \NH\ implies that when $K$ and~$L$ are characteristic subobjects of~$X$ the Huq commutator of $[K,L]_X$ is a characteristic subobject of~$X$. The same result was proved in~\cite{CigoliMontoliCharSubobjects} in a context including categories of interest.
\begin{definition}\cite{CigoliMontoliCharSubobjects}
A subobject $S\leq X$ is said to be \defn{characteristic} when every action ${B\flat X \to X}$ restricts to an action $B\flat S \to S$.
\end{definition}

\begin{proposition}\label{Proposition characterisation of characteristic subobjects}
For a subobject $S \leq X$ the following are equivalent:
\begin{enumerate}
\item $S$ is a characteristic subobject of $X$;
\item each split extension
\[
\xymatrix{X \ar[r]^{x} & Y \ar@<0.5ex>[r]^{f} & Z \ar@<0.5ex>[l]^{s}}
\]
lifts to a morphism of split extensions
\[
\xymatrix{0 \ar@{.>}[r] & S \ar@{{ >}->}[d] \ar@{.>}[r] & T \ar@{.>}[d] \ar@{.>}@<.5ex>[r] & Z \ar@{:}[d] \ar@{.>}@<.5ex>[l] \ar@{.>}[r] & 0\\
0 \ar[r] & X \ar[r]_-{x} & Y \ar@<.5ex>[r]^-{f} & Z \ar@<.5ex>[l]^-{s} \ar[r] & 0}
\]
\item for each $Y$ such that $X\protosplit Y$, $S$ is a normal subobject of $Y$;
\item for each $Y$ such that $X\normal Y$, $S$ is a normal subobject of $Y$.
\end{enumerate}
\end{proposition}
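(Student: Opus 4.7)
My plan is to prove the four conditions equivalent via the cycle
\[\text{(i)} \Rightarrow \text{(iv)} \Rightarrow \text{(iii)} \Rightarrow \text{(ii)} \Rightarrow \text{(i)}.\]
The implications (i) $\Leftrightarrow$ (ii) rest on the standard bijective correspondence between internal actions and split extensions in a semi-abelian category~\cite{BJK, BJK2, Bourn-Janelidze:Semidirect}: an action $B \flat X \to X$ corresponds to a split extension $X \to X \rtimes B \to B$, and the condition that this action restricts to $B \flat S \to S$ translates into the existence of a morphism of split extensions as in (ii) with top kernel $S$. This anchors both (ii) $\Rightarrow$ (i) and a reformulation of (i) as a statement about split extensions that I use below.

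For (i) $\Rightarrow$ (iv), given $X \normal Y$ I would consider the canonical split extension
\[
\xymatrix{0 \ar[r] & X \ar[r]^-{\langle x, 0\rangle} & Y\times_{Y/X}Y \ar@<0.5ex>[r]^-{\pi_2} & Y \ar@<0.5ex>[l]^-{\Delta} \ar[r] & 0}
\]
encoding the conjugation action of $Y$ on $X$. By (i), this action restricts to $S$, and since $S \leq X \leq Y$ this restriction says precisely that $S$ is stable under conjugation by $Y$, i.e.\ $S \normal Y$.

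The implication (iv) $\Rightarrow$ (iii) is immediate, because $X \protosplit Y$ is a special case of $X \normal Y$. For (iii) $\Rightarrow$ (ii), given any split extension $X \to Y \to Z$ we automatically have $X \protosplit Y$, so (iii) yields that the composite $S \to X \to Y$ is a normal monomorphism in $Y$; this is exactly the hypothesis Lemma~\ref{Lifting split extensions} needs in order to produce the morphism of split extensions required by~(ii), closing the cycle.

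The step I expect to need most care is the assertion used in (i) $\Rightarrow$ (iv) that, for $S \leq X \normal Y$, restriction of the conjugation action of $Y$ on $X$ to $S$ is equivalent to $S \normal Y$. The backward direction is routine; for the forward direction, the restricted action gives via the action-extension correspondence a sub-split-extension $S \to T \to Y$ of $Y \times_{Y/X} Y$, and projecting along $\pi_1 : Y \times_{Y/X} Y \to Y$, together with the standard semi-abelian fact that regular images of normal subobjects along regular epimorphisms are again normal, allows one to read off $S \normal Y$ via its original inclusion into $Y$.
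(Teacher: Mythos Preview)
Your proposal is correct and follows essentially the same route as the paper: the same cycle of implications, the trivial step (iv)~$\Rightarrow$~(iii), and the use of Lemma~\ref{Lifting split extensions} for (iii)~$\Rightarrow$~(ii). The only difference is that where the paper cites~\cite{CigoliMontoliCharSubobjects} for (i)~$\Leftrightarrow$~(ii) and (i)~$\Rightarrow$~(iv), you supply explicit arguments---the action/split-extension correspondence for the former, and for the latter the conjugation split extension $Y\times_{Y/X}Y$ (the same one used in Lemma~\ref{Lemma Reduction to protosplit mono}) together with the fact that $\pi_{1}$ restricted to $T$ is split by $\sigma$, so that the regular image of $S\normal T$ along $\pi_{1}|_{T}$ is $S\normal Y$.
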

\begin{proof}
The implications (i) $\Leftrightarrow$ (ii) and (i)~$\Rightarrow$~(iv) were proved in~\cite{CigoliMontoliCharSubobjects}. It follows that the proof will be complete if we show that (iv)~$\Rightarrow$~(iii)~$\Rightarrow$~(ii). Trivially (iv)~$\Rightarrow$~(iii) since (iii) is a special case of (iv). Finally, the implication (iii)~$\Rightarrow$~(ii) follows from Lemma~\ref{Lifting split extensions}.\end{proof}

\begin{proposition}\label{Proposition commutator of characteristic subobjects characteristic}
If $\C$ satisfies \NH, then for $K$ and $L$ characteristic subobjects of $X$, $[K,L]_X$ is a characteristic subobject of $X$.
\end{proposition}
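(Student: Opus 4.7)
My plan is to use the characterisation (i) $\Leftrightarrow$ (iv) from Proposition~\ref{Proposition characterisation of characteristic subobjects}: a subobject $S\leq X$ is characteristic if and only if $S\normal Y$ whenever $X\normal Y$. So it suffices to show that, for every $Y$ with $X\normal Y$, the Huq commutator $[K,L]_{X}$ is a normal subobject of $Y$.

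Fix such a $Y$. Since $K$ and $L$ are characteristic in $X$ and $X\normal Y$, the implication (i)~$\Rightarrow$~(iv) of Proposition~\ref{Proposition characterisation of characteristic subobjects} gives $K\normal Y$ and $L\normal Y$. In particular, $K,L\leq X\leq Y$ with $K,L\normal Y$, so Lemma~\ref{Lemma When factors through subobject} (which uses \NH) applies and yields
\[
[K,L]_{X}=[K,L]_{Y}.
\]
The right hand side is by definition a normal subobject of $Y$, so $[K,L]_{X}\normal Y$.

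Since $Y$ was an arbitrary object containing $X$ as a normal subobject, condition~(iv) of Proposition~\ref{Proposition characterisation of characteristic subobjects} holds for $[K,L]_{X}\leq X$, and therefore $[K,L]_{X}$ is characteristic in $X$. There is no real obstacle here: the whole point of \NH\ (via Lemma~\ref{Lemma When factors through subobject}) is precisely to make the commutator $[K,L]_{X}$ insensitive to enlarging the ambient object, which is exactly what is needed to transport normality from within $X$ to within any $Y$ containing $X$ as a normal subobject.
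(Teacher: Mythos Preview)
Your proof is correct and follows essentially the same route as the paper's own argument: use the characterisation (iv) of Proposition~\ref{Proposition characterisation of characteristic subobjects}, note that $K$ and $L$ become normal in any $Y$ with $X\normal Y$, and then conclude $[K,L]_X=[K,L]_Y\normal Y$. The only cosmetic difference is that you invoke Lemma~\ref{Lemma When factors through subobject} directly, whereas the paper cites the equivalent condition~(ii)/(iii) of Theorem~\ref{Theorem Independence of surrounding object Huq}; these yield the same conclusion.
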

\begin{proof}
By Proposition~\ref{Proposition characterisation of characteristic subobjects} it is sufficient to show that if $X$ is a normal subobject of $Y$ then $[K,L]_X$ is a normal subobject of $Y$. But, trivially by Proposition~\ref{Proposition characterisation of characteristic subobjects} we have $K$ and $L$ are normal subobjects of $Y$ and so by Theorem~\ref{Theorem Independence of surrounding object Huq}, $[K,L]_X=[K,L]_Y$ is a normal subobject of $Y$. 
\end{proof}
\begin{corollary}
If $\C$ satisfies \NH, then for each object $X$, the commutator $[X,X]=[X,X]_X$ is a characteristic subobject of $X$.
\end{corollary}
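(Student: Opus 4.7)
The plan is to reduce this immediately to Proposition~\ref{Proposition commutator of characteristic subobjects characteristic} by taking $K=L=X$. The only thing left to verify is that $X$ itself qualifies as a characteristic subobject of $X$, so that the hypothesis of the proposition is fulfilled with this choice of $K$ and $L$.

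To check that $X$ is a characteristic subobject of itself, I would appeal to any one of the equivalent formulations in Proposition~\ref{Proposition characterisation of characteristic subobjects}. The most direct is condition~(iv): whenever $X\normal Y$, we must have $X\normal Y$, which is obviously true. Equivalently, every split extension with kernel $X$ trivially lifts along $1_X$ to itself, giving~(ii); or, straight from the definition, any action $B\flat X\to X$ restricts to itself. Any of these suffices.

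Applying Proposition~\ref{Proposition commutator of characteristic subobjects characteristic} to the characteristic subobjects $K=L=X$ of $X$ then yields that $[X,X]_X$ is a characteristic subobject of $X$, as required. There is no real obstacle: the corollary is a one-step specialisation of the previous proposition, relying only on the trivial observation that an object is always a characteristic subobject of itself.
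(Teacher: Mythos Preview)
Your proof is correct and follows exactly the same approach as the paper: apply Proposition~\ref{Proposition commutator of characteristic subobjects characteristic} with $K=L=X$, noting that $X$ is trivially a characteristic subobject of itself. The paper's proof is a single sentence to this effect; your elaboration on why $X$ is characteristic in $X$ is fine but not strictly necessary.
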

\begin{proof}
This follows from Proposition~\ref{Proposition commutator of characteristic subobjects characteristic} with $K=L=X$, since $X$ is trivially a characteristic subobject of $X$.
\end{proof}

\section{Categories of interest satisfy \NH}\label{Categories of interest}

The condition~\NH\ was first studied in the first author's thesis~\cite{AlanThesis}, where it is also shown that any \emph{category of interest} in the sense of Orzech satisfies it. In this section we recall this fact.

\begin{definition}\cite{Orzech}\label{cat.int}
A \defn{category of interest} is a variety of universal algebras whose theory contains a unique constant $0$, a set $\Omega$ of finitary operations and a set of identities $\mathbb{E}$ such that:
\begin{enumerate}
	\item[\CI1] $\Omega=\Omega_0\cup\Omega_1\cup\Omega_2$, where $\Omega_i$ is the set of $i$-ary operations;
	\item[\CI2] $\Omega_0=\{0\}$, $-\in\Omega_1$ and $+\in\Omega_2$, where $\Omega_i$ is the set of $i$-ary operations, and $\mathbb{E}$ includes the group laws for $0$, $-$, $+$; define $\Omega_1'=\Omega_1\setminus\{-\}$, $\Omega_2'=\Omega_2\setminus\{+\}$;
	\item[\CI3] for any $*\in\Omega_2'$, the set $\Omega_2'$ contains $\bullet$ defined by $x\bullet y=y*x$;
	\item[\CI4] for any $\omega\in\Omega_1'$, $\mathbb{E}$ includes the identity $\omega(x+y)=\omega(x)+\omega(y)$;
	\item[\CI5] for any $*\in\Omega_2'$, $\mathbb{E}$ includes the identity $x*(y+z)=x*y+x*z$;
	\item[\CI6] for any $\omega\in\Omega_1'$ and $*\in\Omega_2'$, $\mathbb{E}$ includes the identity $\omega(x)*y=\omega(x*y)$;
	\item[\CI7] for any $*\in\Omega_2'$, $\mathbb{E}$ includes the identity $x+(y*z)=(y*z)+x$;
	\item[\CI8] for any $*$, $\sqbullet\in\Omega_2'$, there exists a word $w$ such that $\mathbb{E}$ includes the identity
		$$ (x*y)\sqbullet z=w(x*_1(y\sqbullet_1 z),\ldots, x*_m(y\sqbullet_m z),y*_{m+1}(x\sqbullet_{m+1} z),\ldots,y*_n(x\sqbullet_n z)) $$
		where $*_1,\ldots,*_n$ and $\sqbullet_1,\ldots,\sqbullet_n$ are operations in $\Omega_2'$
\end{enumerate}
\end{definition}
Note that we have slightly generalized the original axiom \CI8.
Note also that since any \emph{category of interest} is a variety of $\Omega$-groups, it is automatically semi-abelian~\cite{Janelidze-Marki-Tholen}. 

\begin{examples}\label{examples categories of interest}
The categories $\Gp$ of groups, $\Rng$ of non-unitary rings, and $\K$-$\Lie$ of Lie algebras, $\K$-$\Leibniz$ of Leibniz algebras, and $\K$-$\Poisson$ of Poisson algebras over a field $\K$ are all \emph{categories of interest}. On the other hand, categories of Jordan algebras and of non-associative algebras aren't, because \CI8 fails for them. More generally, varieties of distributive $\Omega$-groups~\cite{Higgins} need not be \emph{categories of interest}.
\end{examples}

\begin{remark} \label{pr.com}
By \CI5, for any $*\in\Omega_2'$, $\mathbb{E}$ includes the identity
\begin{multline*}
(x*z)+(x*t)+(y*z)+(y*t)\\
=(x+y)*(z+t)=(x*z)+(y*z)+(x*t)+(y*t);
\end{multline*}
hence it also includes $(x*t)+(y*z)=(y*z)+(x*t)$.
\end{remark}

\begin{theorem}\emph{\cite[Theorem 5.3.6]{AlanThesis}} \label{teo: hi.id}
If $\C$ is a \emph{category of interest}, $X$ an object in~$\C$ and $K$, $L\normal X$, then the Higgins commutator $[K,L]$ is normal in $X$.
\end{theorem}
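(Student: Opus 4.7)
The plan is to work elementwise, exploiting the explicit variety structure of a category of interest. The main structural step is to identify the Higgins commutator $[K,L]$ with the subalgebra of $X$ generated by the elements $k+l-k-l$, $k*l$ and $l*k$, for $k\in K$, $l\in L$ and $*\in\Omega_2'$. This rests on a normal-form argument for $K+L$: using the identities \CI4--\CI8 together with the commutativity of $*$-products recorded in Remark~\ref{pr.com}, every element of $K+L$ can be rewritten as the sum of an element of $K$, an element of $L$, and a sum of such cross-terms. Hence the co-smash $K\tensor L=\ker(K+L\twoheadrightarrow K\times L)$ is the subalgebra of $K+L$ generated by the basic cross-terms above, and taking the regular image, $[K,L]$ is the subalgebra of $X$ generated by the same expressions evaluated in $X$.

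Granting this description, set $N=[K,L]$. In a category of interest, normality of a subalgebra $N\leq X$ amounts to closure under $n\mapsto x+n-x$ and under $n\mapsto x*n$, $n\mapsto n*x$ for every $x\in X$ and $*\in\Omega_2'$. I would verify these on the basic generators. For $g=k+l-k-l$, normality of $K$ and $L$ in $X$ gives $x+k-x\in K$ and $x+l-x\in L$, hence $x+g-x=(x+k-x)+(x+l-x)-(x+k-x)-(x+l-x)$ is again a basic generator; by \CI5, $x*g=x*k+x*l-x*k-x*l$, with $x*k\in K$ and $x*l\in L$ (since $K,L$ are also $\Omega_2'$-ideals), yields another basic generator, and $g*x$ is handled symmetrically via \CI3. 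For $g=k*l$, \CI7 gives $x+g-x=g$, while applying \CI8 to $(k*l)\sqbullet x$ expresses it as a word $w$ in elements of the form $k*_i(l\sqbullet_i x)$ and $l*_i(k\sqbullet_i x)$: normality forces $l\sqbullet_i x\in L$ and $k\sqbullet_i x\in K$, so these are basic generators and $w$ applied to basic generators lies in $N$. The case $x\sqbullet(k*l)$ is handled similarly via \CI3.

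Finally, I would propagate these closure properties from the basic generators to all of $N$. Closure under $+$ and $-$ is compatible with conjugation and with $x*(-)$ by standard group manipulations and \CI5. For $\omega\in\Omega_1'$, by \CI4 and \CI6 the operation $\omega$ sends a basic generator to another basic generator ($\omega(k+l-k-l)=\omega(k)+\omega(l)-\omega(k)-\omega(l)$ and $\omega(k*l)=\omega(k)*l$), with $\omega(k)\in K$ and $\omega(l)\in L$. Similarly, $*$-products of two basic generators can be reduced to sums of basic generators using \CI5 and \CI8 together with the fact that $K$ and $L$ are $\Omega_2'$-ideals. The main obstacle is the structural description of $[K,L]$ from the first step: once the normal form for elements of $K+L$ is in hand, everything else reduces to a mechanical sequence of applications of \CI3--\CI8.
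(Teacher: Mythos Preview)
Your closure computations---showing that the subalgebra of $X$ generated by the cross-terms $k+l-k-l$ and $k*l$ is stable under conjugation and under $*$-multiplication by arbitrary $x\in X$---are essentially the paper's proof. The paper carries out exactly these checks on generators and then propagates by induction over sums and $*$-products, using \CI5, \CI7 and \CI8 together with the hypothesis that $K$ and $L$ are ideals of $X$.

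The problem is your first step. You want to identify $[K,L]$ with this subalgebra via a normal-form argument inside the coproduct $K+L$, but the reductions you rely on all use that $K$ and $L$ are ideals of the ambient object, and in $K+L$ they are not. For instance, applying \CI8 to $(k*l)\sqbullet y$ only yields cross-terms when $l\sqbullet_i y\in L$ and $k\sqbullet_i y\in K$, which fails for general $y\in K+L$; and conjugating a group commutator $-k-l+k+l$ by some $k_0\in K$ produces $-k_0+l+k_0\notin L$, so one must introduce a further cross-term, and unwinding this leads to an unbounded collection process rather than a terminating rewriting. (Even in the pure group case, the statement that $K\tensor L$---the Cartesian subgroup of the free product---is the \emph{subgroup} generated by basic commutators is a genuine theorem, not a consequence of a handful of identities.) So the ``main obstacle'' you flag is real, and the method you propose for it does not work as written.

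The paper sidesteps this entirely by quoting Higgins' Theorem~4B: $[K,L]$ is the \emph{ideal} of $K\join L$ generated by the elementary commutator words. Hence it suffices to show that the \emph{subobject} $\CKL$ of $X$ generated by these words is normal in $X$; then $\CKL$ is in particular an ideal of $K\join L$, so it coincides with the ideal it generates there, namely $[K,L]$. In other words, your closure checks alone already finish the proof once you replace the normal-form computation by the known description of $[K,L]$ as a normal closure.
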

\begin{proof}
Since categories of interest are distributive $\Omega$-groups by definition, according to Theorem~4B in~\cite{Higgins}, the commutator $[K,L]$ is the ideal of $K \join L$ generated by the elementary commutator words $\overline{w}(\k,\l)=-w(\k)-w(\l)+w(\k+\l)$, with $\k\in K^n$, $\l\in L^n$ and $w$ a single $n$-ary operation (for any $n$) or the identity. Since $\C$ is a \emph{category of interest}, we have elementary commutator words of these types:
\begin{enumerate}
	\item[(i)] $-k-l+k+l$, with $k\in K$ and $l\in L$;
	\item[(ii)] $-\omega(k)-\omega(l)+\omega(k+l)$, with $k\in K$, $l\in L$ and $\omega\in\Omega_1'$;
	\item[(iii)] $-(k_1*k_2)-(l_1*l_2)+((k_1+l_1)*(k_2+l_2))$, with $k_1$, $k_2\in K$, $l_1$, $l_2\in L$ and $*\in\Omega_2'$.
\end{enumerate}
Now, by Definition~\ref{cat.int},
$$ -\omega(k)-\omega(l)+\omega(k+l)\stackrel{\CI4}{=}-\omega(k)-\omega(l)+\omega(k)+\omega(l) $$
and $\omega(k)\in K$, $\omega(l)\in L$, since $K$ and $L$ are subobjects. It follows that words of type (ii) are again of type (i). Moreover,
\begin{multline*}
	-(k_1*k_2)-(l_1*l_2)+((k_1+l_1)*(k_2+l_2)) \\
	\stackrel{\CI5}{=}-(k_1*k_2)-(l_1*l_2)+(k_1*k_2)+(k_1*l_2)+(l_1*k_2)+(l_1*l_2) \\
	=(k_1*l_2)+(l_1*k_2),
\end{multline*}
where the last equality follows from Remark~\ref{pr.com}.

So we only have two types of elementary commutator words:
\begin{enumerate}
	\item[(i)] $-k-l+k+l$, with $k\in K$ and $l\in L$;
	\item[(iii)'] $(k_1*l_2)+(l_1*k_2)$, with $k_1$, $k_2\in K$, $l_1$, $l_2\in L$ and $*\in\Omega_2'$.
\end{enumerate}
Now let $\CKL$ be the subobject of $X$ generated by the elementary commutator words. We shall prove that $\CKL$ is normal in $X$ (and thus in $K \join L$). As a consequence, $\CKL=[K,L]$ and $[K,L]$ will be an ideal of $X$, as required.

In order to prove that $\CKL$ is an ideal of $X$, by Theorem 1.7 in~\cite{Orzech} it suffices to prove that it is closed under conjugation and products with elements of $X$. We start by verifying these two conditions for generators.

(i) For any $k\in K$, $l\in L$, $x\in X$:
		\begin{multline*}
		-x+(-k-l+k+l)+x\\
		=(-x-k+x)+(-x-l+x)+(-x+k+x)+(-x+l+x)
		\end{multline*}
		and the right hand expression is an elementary commutator word of type (i), $K$ and $L$ being ideals of $X$;
	for any $k\in K$, $l\in L$, $x\in X$ and $*\in\Omega_2'$:
		$$ (-k-l+k+l)*x\stackrel{\CI5}{=}-(k*x)-(l*x)+(k*x)+(l*x) $$
		and again the right hand expression is an elementary commutator word of type~(i), $K$ and $L$ being ideals of $X$.

(iii)' For any $k_1$, $k_2\in K$, $l_1$, $l_2\in L$, $x\in X$ and $*\in\Omega_2'$
		$$ -x+(k_1*l_2)+(l_1*k_2)+x\stackrel{\CI7}{=}(k_1*l_2)+(l_1*k_2), $$
		that is, commutator words of type (iii)' are stable by conjugation;
	for any $k_1$, $k_2\in K$, $l_1$, $l_2\in L$, $x\in X$ and $*$, $\sqbullet\in\Omega_2'$,
		$$ ((k_1*l_2)+(l_1*k_2))\sqbullet x\stackrel{\CI5}{=}((k_1*l_2)\sqbullet x)+((l_1*k_2)\sqbullet x) $$
		but, by \CI8, there exists a word $w$ such that
\begin{multline*}
(k_1*l_2)\sqbullet x=\\
w(k_1*_1(l_2\sqbullet_1 x),\ldots, k_1*_m(l_2\sqbullet_m x),l_2*_{m+1}(k_1\sqbullet_{m+1} x),\ldots,l_2*_n(k_1\sqbullet_n x))
\end{multline*}
		where each term on the right is an elementary commutator word since $K$ and $L$ are ideals, and so the product
		$(k_1*l_2)\sqbullet x$ is generated by elementary commutator words. Similarly, the same holds for the product ${(l_1*k_2)\sqbullet x}$.

We conclude the proof by induction. Let $w_1(\k,\l)$ and $w_2(\k',\l')$ be words in $\CKL$ satisfying the two conditions above---conjugates and products with elements of $X$ are still in $\CKL$. Let us first consider the sum $w_1+w_2$. For any $x\in X$,
		$$ -x+(w_1+w_2)+x=(-x+w_1+x)+(-x+w_2+x) $$
		and the right hand expression is in $\CKL$, since $w_1$ and $w_2$ satisfy the induction hypothesis. For any $x\in X$ and $*\in\Omega_2'$,
		$$ (w_1+w_2)*x\stackrel{\CI5}{=}(w_1*x)+(w_2*x) $$
and again the right hand expression is in $\CKL$, since $w_1$ and $w_2$ satisfy the induction hypothesis.
		
Now consider the product $w_1*w_2$ where $*$ is a fixed operation in $\Omega_2'$. For any $x\in X$,
		$$ -x+(w_1*w_2)+x\stackrel{\CI7}{=}w_1*w_2, $$
		that is, the product $w_1*w_2$ is stable under conjugation;
and for any $x\in X$ and $\sqbullet\in\Omega_2'$,
		\begin{align*} (w_1*w_2)\sqbullet x
 \stackrel{\CI8}{=}w(&w_1*_1(w_2\sqbullet_1 x),\ldots, w_1*_m(w_2\sqbullet_m x),\\
&w_2*_{m+1}(w_1\sqbullet_{m+1} x),\ldots,w_2*_n(w_1\sqbullet_n x)) \end{align*}
where each term on the right is generated by elementary commutator words, because $w_1$ and $w_2$ satisfy the induction hypothesis. Hence $(w_1*w_2)\sqbullet x\in \CKL$, which concludes the proof.
\end{proof}

Next we give an example showing that~\NH\ need not hold outside the context of \emph{categories of interest}. In particular \CI7 is necessary. Example~\ref{Example Axiom (CI8)} below proves that also \CI8 is also necessary. Both show that there are semi-abelian categories in which Higgins commutators of normal subobjects need not be normal.

\begin{example}\label{Example Axiom (CI7)}~\cite[Example 5.3.8]{AlanThesis}
Consider the category whose objects are groups with an additional binary associative and distributive operation $\ast$, which satisfies all the axioms in Definition~\ref{cat.int}, except for \CI7, and morphisms are group homomorphisms preserving~$\ast$. Let $A$ be the dihedral group of order 8, in additive notation:
$$ A=\langle r,s \mid 4r=0,\ 2s=0,\ s+r+s=3r\rangle $$
endowed with an associative and distributive product generated by:
$$
\begin{array}{r|cc}
	\ast & r & s \\
	\hline
	r & s & s \\
	s & s & s \\
\end{array}
$$
The subobject $K=\{0,2r,s,s+2r\}$ of $A$ generated by $s$ and $2r$ is an ideal of~$A$, whereas the commutator $[K,K]=\{0,s\}$ is not, since it is not closed under conjugation: indeed, $r+s-r=r+s+3r=s+2r\not \in [K,K]$.
\end{example}


\section{Independence of the \emph{Smith is Huq} condition \SH} \label{(SH)}

In this section we prove that \NH\ is independent of the \emph{Smith is Huq} condition~\SH, by giving examples of categories which satisfy one but not the other.

\subsection{The \emph{Smith is Huq} condition \SH}\label{sub (SH)}
Given two equivalence relations $R$ and~$S$ on $X$, with respective normalisations $K$, $L\normal X$, the Smith commutator $[R,S]^{\S}$ of $R$ and~$S$ is an equivalence relation on $X$ which measures how far $R$ and~$S$ are from centralising each other (see~\cite{Smith, Pedicchio, Borceux-Bourn}). If the Smith commutator of two equivalence relations is trivial, then the Huq commutator of their normalisations is also trivial~\cite{BG}. But, in general, the converse is false; in~\cite{Borceux-Bourn, Bourn2004} a counterexample is given in the category of digroups, which is a semi-abelian variety, even a variety of~$\Omega$-groups~\cite{Higgins}. The requirement that the two commutators vanish together is known as the \defn{Smith is Huq Condition (SH)} and it is shown in~\cite{MFVdL} that, for a semi-abelian category, this condition holds if and only if every star-multiplicative graph is an internal groupoid, which is important in the study of internal crossed modules~\cite{Janelidze}. Moreover, \SH\ is also known to hold for pointed strongly protomodular categories~\cite{BG} (in particular, for any Moore category~\cite{Gerstenhaber, Rodelo:Moore}) and in action accessible categories~\cite{BJ07} (in particular, for any \emph{category of interest}~\cite{Montoli}).

\subsection{Characterisation in terms of Higgins commutators}
Given subobjects $k\colon K\to X$, $l\colon L\to X$ and $m\colon M \to X$ of an object $X$, the \defn{ternary Higgins commutator} $[K,L,M]\leq X$ is the image of the composite
\[
\xymatrix@C=3em{K\tensor L\tensor M \ar@{{ |>}->}[r]^-{\iota_{K,L,M}} \ar@{.{ >>}}[d] & K+L+M \ar[d]^-{\muss{k}{l}{m}}\\ [K,L,M] \ar@{{ >}.>}[r] & X} 
\]
where $\iota_{K,L,M}$ is the kernel of
\[
\xymatrix@=6em{K+L+M \ar[r]^-{\left\langle\begin{smallmatrix} i_{K} & i_{K} & 0 \\
i_{L} & 0 & i_{L}\\
0 & i_{M} & i_{M}\end{smallmatrix}\right\rangle} & (K+L)\times (K+M) \times (L+M);}
\]
$i_k, i_L$ and $i_M$ denote coproduct injection morphisms. The object $K\tensor L\tensor M$ is the \defn{ternary co-smash product}~\cite{Smash, HVdL, Actions} of $K$,~$L$ and~$M$. 

The main result of~\cite{HVdL} states that for all $K$, $L\normal X$, the Smith commutator $[K,L]^{\S}$ may be decomposed as the join $[K,L]\join [K,L,X]$, so that \SH\ holds if and only if $[K,L]_{X}=[K,L]\join [K,L,X]$ or, equivalently, $[K,L,X]\leq [K,L]_{X}$.

\subsection{Relation between \NH\ and \SH}

It is a natural question to ask whether the conditions \NH\ and \SH\ are related. The following examples show that they are, in fact, independent.

\begin{example}\label{Example Axiom (CI8)}~\cite[Example 5.3.7]{AlanThesis}
Let $\NARng$ be the category \defn{non-associa\-tive rings}~\cite{Higgins} whose objects are abelian groups with an additional binary operation~$\ast$ which distributes over addition; and whose morphisms are group homomorphisms preserving~$\ast$. This category satisfies all axioms in Definition~\ref{cat.int}, except for \CI8.

Let $A$ be the object in $\NARng$ with abelian group structure the free abelian group on $\{x,y,z\}$, endowed with a distributive product with the following multiplication table:
$$
\begin{array}{r|ccc}
	\ast & x & y & z \\
	\hline
	x & x & 0 & y \\
	y & 0 & 0 & x \\
	z & y & x & z \\
\end{array}
$$
The subobject $K$ generated by $x$ and $y$ is an ideal of $A$, whereas the commutator $[K,K]$, which is the subobject generated by $x$ is not, because it is not closed under multiplication with external elements: $x*z=y\not \in [K,K]$.

Since the category $\NARng$ is strongly protomodular~(being a variety of distributive $\Omega_2$-groups~\cite{MM}), it follows that strong protomodularity~\cite{Bourn-SP, Bourn2004} does not imply \NH. In particular, since a strongly protomodular semi-abelian category always satisfies \SH\, it follows that a semi-abelian category may satisfy \SH\ but not~\NH.
\end{example}

\begin{example}
Let $\C$ be the category whose objects are abelian groups endowed with a symmetric and distributive ternary operation $t$ satisfying the following associativity property:
$$ t(t(x,y,z),u,v)=t(t(x,u,v),y,z). $$
Morphisms are as usual maps preserving all operations.
Since $\C$ is a variety of distributive $\Omega$-groups, we know from~\cite[Theorems 4A, 4C]{Higgins} that, given $K$, $L\leq X$ in $\C$:
\begin{enumerate}
	\item $K \normal X$ if and only if for all $k\in K$ and $x_1,x_2\in X$ also $t(k,x_1,x_2)\in K$;
	\item $[K,L]$ is generated by elements of type $t(k_1,k_2,l_2)$ or $t(k_1,l_1,l_2)$, where $k_1$, $k_2\in K$ and $l_1$, $l_2\in L$.
\end{enumerate}
For $K \leq X$, if $k_1$, $k_2$, $k_3$ are elements of $K$ and $x_1$, $x_2$ are elements of $X$ such that $t(k_i,x_1,x_2) \in K$, then 
\begin{align*}
t(k_1+k_2,x_1,x_2)=t(k_1,x_1,x_2)+t(k_2,x_1,x_2) \in K\\
t(t(k_1,k_2,k_2),x_1,x_2) = t(t(k_1,x_1,x_2),k_2,k_3) \in K.
\end{align*}
Hence it is sufficient to check (i) on generators.
As a consequence, if $K$, $L\normal X$ then $[K,L]$ is normal in $X$, since 
\[
t(t(k_1,k_2,l_2),u,v)=t(t(k_1,u,v),k_2,l_2)\in[K,L]
\]
for all $u$, $v\in X$, and a similar argument holds for the terms of second type. This shows that \NH\ holds in $\C$. 

Consider now the object of $\C$ consisting of the abelian group $\mathbb{Z}$ with the operation $t(x,y,z)=xyz$. Then, if we consider the subobjects $2\mathbb{Z}$ and $4\mathbb{Z}$, it happens that $[2\mathbb{Z},4\mathbb{Z}]=16\mathbb{Z}$, while $[2\mathbb{Z},4\mathbb{Z},\mathbb{Z}]=8\mathbb{Z}$. So $[2\mathbb{Z},4\mathbb{Z}]^{\S}=[2\mathbb{Z},4\mathbb{Z}]\join[2\mathbb{Z},4\mathbb{Z},\mathbb{Z}]>[2\mathbb{Z},4\mathbb{Z}]$ showing that $\C$ does not satisfy \SH.
\end{example}


\section{Equivalent characterisations of \SH\ + \NH} \label{(NH) + (SH)}

Many categories---all \emph{categories of interest}, for instance, as explained in~\ref{teo: hi.id} and~\ref{sub (SH)}---do actually satisfy \emph{both} \SH\ and \NH. These two conditions, when required together, may be characterised in terms of the fibration of points as shown in Theorem~\ref{Theorem (SH) + (NH)}. We begin with a straightforward characterisation in terms of ternary commutators.

\begin{proposition}\label{Ternary characterisation}
A semi-abelian category $\C$ satisfies \SH\ {\rm +} \NH\ if and only if for all $K$, $L\normal X$ in $\C$,
\[
[K,L,X]\leq [K,L].
\]
\end{proposition}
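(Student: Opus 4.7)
The plan is to reduce both implications to a single transparent calculation that combines Lemma~\ref{Lemma Higgins = Huq} (which tells us that \NH\ forces $[K,L]=[K,L]_X$ for normal $K$, $L$) with the join decomposition $[K,L]^{\S}=[K,L]\join[K,L,X]$ from~\cite{HVdL} recalled in Subsection~\ref{sub (SH)}, together with the reformulation stated there that \SH\ is equivalent to $[K,L,X]\leq[K,L]_X$ for all $K$, $L\normal X$.

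For the forward implication, I would assume \SH\ and \NH. First, \NH\ together with Lemma~\ref{Lemma Higgins = Huq} gives $[K,L]_X=[K,L]$ whenever $K$, $L\normal X$. Second, the reformulation of \SH\ yields $[K,L,X]\leq[K,L]_X$. Concatenating these two facts gives $[K,L,X]\leq[K,L]_X=[K,L]$, which is the desired inclusion.

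For the converse, I would start from the hypothesis $[K,L,X]\leq[K,L]$ for all $K$, $L\normal X$. The inclusion $[K,L]\leq[K,L]_X$ is automatic (by Lemma~\ref{Lemma Huq construction}), so already $[K,L,X]\leq[K,L]_X$, which is \SH. Knowing \SH, the decomposition $[K,L]_X=[K,L]\join[K,L,X]$ is available, and the standing hypothesis collapses the right hand side to $[K,L]$; hence $[K,L]=[K,L]_X$ is a normal subobject of $X$, yielding \NH.

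I don't expect any real obstacle: the proof is an exercise in chaining together the three facts above. The only thing to stay attentive to is that \SH\ is not literally the inclusion $[K,L,X]\leq[K,L]_X$ but a statement comparing Smith and Huq commutators, so the HVdL decomposition is precisely the bridge allowing one to translate between the two languages.
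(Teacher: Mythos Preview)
Your argument is correct and follows essentially the same route as the paper: the paper compresses both directions into the single chain of inclusions $[K,L]\leq [K,L]_{X}\leq [K,L]\join [K,L,X]$ (the right-hand side being the normalisation of the Smith commutator by~\cite{HVdL}), from which \SH\ {\rm +} \NH\ amounts to both inclusions being equalities, while $[K,L,X]\leq [K,L]$ forces the outer terms to coincide. Your converse for \NH\ is slightly roundabout---rather than first deducing \SH\ and then invoking the decomposition, you can read off $[K,L]=[K,L]_X$ directly from the chain once the hypothesis collapses the right-hand side to $[K,L]$---but the content is the same.
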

\begin{proof}
This follows from the chain of inclusions
\[
[K,L]\leq [K,L]_{X}\leq [K,L]\join [K,L,X]
\]
and the fact that the Smith commutator of the equivalence relations corresponding to $K$ and $L$ has the join on the right as its normalisation~\cite{HVdL}.
\end{proof}

This immediately implies that any semi-abelian category $\C$ which satisfies\\\SH~{\rm +}~\NH\ is \emph{peri-abelian} in the sense of~\cite{Bourn-Peri}, since, via the characterisation in~\cite{GrayVdL1}, $\C$ is such if and only if for all $K\normal X$ we have $[K,K,X]\leq [K,K]$.

It was proved in~\cite{BMFVdL, MFVdL3} that \SH\ is equivalent to the condition that kernel functors reflect Huq commutativity of normal subobjects. By (vii) in Theorem~\ref{Theorem Independence of surrounding object Huq}, condition \NH\ is equivalent to the condition that Huq commutators of cospans of normal monomorphisms which are the image of cospans of normal monomorphisms under a kernel functor are themselves images of normals subobjects under the same kernel functor. Hence we are able to study these properties together using an abstract functor as in Lemmas~\ref{lemma: reflection of commutativity the same as reflection of commutator} and~\ref{lemma: NH + SH for abtract functor} below, and make conclusions about the condition \NH\ + \SH\ in Theorem~\ref{Theorem (SH) + (NH)}.

\begin{definition}
A class $\CC$ of cospans in $\C$ is \defn{closed under (direct) images} when for any cospan $(k\colon K\to X,l\colon L\to X)$ in $\CC$ and any regular epimorphism $e\colon X\to X'$ in $\C$, the cospan $(k',l')$ where $k'$ and $l'$ are the images of $e\comp k$ and $e\comp l$, respectively, is in $\CC$.
\end{definition}

Recall that a functor is said to be \defn{conservative} when it reflects isomorphisms.

\begin{lemma}\label{lemma: reflection of commutativity the same as reflection of commutator}
Let $\C$ and $\D$ be semi-abelian categories, let $\CC$ be a class of cospans in $\C$ which is closed under images, let $F\colon \C\to \D$ be a conservative functor which preserves limits and regular epimorphisms, and let $\DD$ be the image of $\CC$ under $F$. The following are equivalent: 
\begin{enumerate}
\item $F$ reflects Huq commutativity of those cospans in $\DD$;
\item $F$ reflects Huq commutators of those cospans in $\DD$.
\end{enumerate} 
\end{lemma}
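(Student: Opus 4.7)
The plan is to reduce both conditions to a single reasoning about \emph{values} of Huq commutators: Huq commutativity of a cospan $(k,l)$ is precisely the condition $[k,l]_X = 0$. Since $F$ preserves finite limits it sends the zero object to the zero object, so $(Fk, Fl)$ commute in $\D$ if and only if $F(0) = [Fk, Fl]_{FX}$; condition (ii) then immediately returns $[k,l]_X = 0$, which proves the implication (ii) $\Rightarrow$ (i).

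For the converse (i) $\Rightarrow$ (ii), I would first extract two consequences of the hypotheses on $F$, which will be used throughout. Firstly, since $F$ preserves kernels and regular epimorphisms, and in a semi-abelian category every regular epimorphism is the cokernel of its kernel, $F$ sends the cokernel of a normal monomorphism $N \normal X$ to the cokernel of $FN \normal FX$, so $F(X/N) = FX/FN$ in $\D$. Secondly, $F$ reflects the order on subobjects: given $N, M \leq X$ with $FN \leq FM$, the comparison $N \times_X M \to N$ is sent by $F$ (which preserves this pullback) to an isomorphism, and is therefore itself an isomorphism by conservativity.

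Now fix a cospan $(k,l) \in \CC$ and a normal subobject $N \normal X$ with $FN = [Fk, Fl]_{FX}$; I show $N = [k,l]_X$ by proving two inclusions. For $[k,l]_X \leq N$, apply $F$ to the quotient $q\colon X \to X/N$: its image is the quotient $FX \to FX/[Fk, Fl]_{FX}$, in which the images of $Fk$ and $Fl$ commute by definition of the Huq commutator. The image cospan of $(q\comp k, q\comp l)$ in $X/N$ lies in $\CC$ by the closure hypothesis, and its image under $F$ consists of commuting morphisms; by assumption (i), this image cospan commutes already in $X/N$, and since commutativity of morphisms coincides with that of their regular images, $q\comp k$ and $q\comp l$ commute, yielding $[k,l]_X \leq N$. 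For the reverse inclusion, apply $F$ to the quotient $X \to X/[k,l]_X$: in it, the images of $k$ and $l$ commute, and this cooperation is preserved by $F$, so $[Fk, Fl]_{FX} \leq F([k,l]_X)$, i.e.\ $FN \leq F([k,l]_X)$; the order-reflection property above then gives $N \leq [k,l]_X$.

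The substantive ingredient is the closure of $\CC$ under images, which is precisely what ensures the image cospan in $X/N$ is again in $\CC$ so that (i) can be invoked; the remaining difficulty is purely bookkeeping, namely checking that $F$ maps zero objects, quotients by normal subobjects, and the subobject order to their counterparts in $\D$ in a way that is both preserved and reflected. All of this follows cleanly from the assumption that $F$ is conservative and preserves limits and regular epimorphisms.
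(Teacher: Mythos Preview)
Your proof is correct and follows essentially the same approach as the paper's. Both arguments use the zero object for (ii)~$\Rightarrow$~(i), and for (i)~$\Rightarrow$~(ii) pass to the quotient $X/N$, invoke closure of $\CC$ under images to apply (i), and then obtain the reverse inequality from the fact that $F$ preserves cooperation; the only cosmetic difference is that you isolate the ``$F$ preserves quotients'' and ``$F$ reflects the subobject order'' facts up front and conclude via two inclusions in~$\C$, whereas the paper establishes $[K,L]_X\leq W$ in~$\C$, the reverse inclusion in~$\D$, and then reflects the resulting equality via conservativity.
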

\begin{proof}
The implication (ii)~$\Rightarrow$~(i) follows from the fact that the functor $F$ preserves the zero object. To prove (i)~$\Rightarrow$~(ii), let $(k\colon K\to X,l\colon L\to X)$ be a cospan in $\CC$ and suppose there exists a normal monomorphism $w\colon W\to X$ such that $F(w)$ is the Huq commutator of $F(k)$ and $F(l)$. Let $e$ be the cokernel of $w$ as displayed in the the short exact sequence
\[
\xymatrix{
0\ar[r] & W \ar[r]^-{w} & X \ar[r]^-{e} & X/W \ar[r] & 0
}
\]
and let $k'$ and $l'$ be the images of $e\comp k$ and $e\comp l$.
Since $F$ preserves limits and regular epimorphisms it preserves short exact sequences (since regular epimorphisms are normal in $\D$), and so $F(e)$ is the quotient of the Huq commutator of $F(k)$ and $F(l)$, which by definition means that $F(e\comp k)$ and $F(e\comp l)$ and so by~\cite{Borceux-Bourn} that $F(k')$ and $F(l')$ commute. Since $\CC$ is closed under images, $(k',l')$ is in $\CC$ and so, by (i), $k'$ and $l'$ and therefore $e\comp k$ and $e\comp l$ commute. It follows that $[K,L]_X \leq W$ and therefore that $F([K,L]_X) \leq F(W)$. Since $F$ preserves Huq commutativity (since it preserves limits) and short exact sequences it follows that
\[
F(W)=[F(K),F(L)]_{F(X)} \leq F([K,L]_X),
\]
meaning that $F([K,L]_X)=F(W)$, and therefore since $F$ reflects isomorphisms $[K,L]_X=W$ as required.
\end{proof}
\begin{lemma}\label{lemma: NH + SH for abtract functor}
Let $\C$ and $\D$ be semi-abelian categories, let $\CC$ be a class of cospans in $\C$ which is closed under images, let $F\colon \C\to \D$ be a conservative functor which preserves limits and regular epimorphisms, and let $\DD$ be the image of $\CC$ under $F$. 
The following are equivalent: 
\begin{enumerate}
\item $F$ reflects Huq commutativity of cospans in $\DD$, and Huq commutators of cospans in $\DD$ are the image of normal subobjects under $F$;
\item $F$ reflects Huq commutators of cospans in $\DD$, and Huq commutators of cospans in $\DD$ are the image of normal subobjects under $F$;
\item $F$ preserves Huq commutators of cospans in $\CC$.
\end{enumerate} 
\end{lemma}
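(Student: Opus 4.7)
The plan is to show (i)$\Leftrightarrow$(ii) by invoking Lemma~\ref{lemma: reflection of commutativity the same as reflection of commutator}, and then to establish the main content (ii)$\Leftrightarrow$(iii) by direct manipulation, with the crucial auxiliary observation that $F$ reflects equalities of (normal) subobjects of any fixed object. This last fact follows because, $F$ being conservative and preserving limits, if $A$, $B\leq X$ have $F(A)=F(B)$ as subobjects of $F(X)$, then $F(A\cap B)=F(A)\cap F(B)=F(A)$, so the monomorphism $A\cap B\rightarrowtail A$ becomes an isomorphism under $F$ and is therefore itself an isomorphism; similarly for $A\cap B\rightarrowtail B$, whence $A=B$.

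For (i)$\Leftrightarrow$(ii): the two statements differ only in that one asks $F$ to reflect Huq commutativity and the other to reflect Huq commutators of cospans in $\DD$. Since the second clause (``commutators are images of normal subobjects'') is identical in both, Lemma~\ref{lemma: reflection of commutativity the same as reflection of commutator} supplies the equivalence.

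For (ii)$\Rightarrow$(iii): take any $(k\colon K\to X, l\colon L\to X)$ in $\CC$. The second clause of (ii) gives a normal subobject $w\colon N\rightarrowtail X$ with $F(w)=[F(K),F(L)]_{F(X)}$. The reflection clause of (ii) then forces $N=[K,L]_{X}$, and therefore $F([K,L]_{X})=F(N)=[F(K),F(L)]_{F(X)}$, which is exactly preservation on $\CC$.

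For (iii)$\Rightarrow$(ii): preservation already furnishes the second clause of (ii), since for $(k,l)\in\CC$ one has $[F(K),F(L)]_{F(X)}=F([K,L]_{X})$, the image of a normal subobject. For the reflection clause, suppose $w\colon N\rightarrowtail X$ is normal with $F(w)=[F(K),F(L)]_{F(X)}$; then by preservation $F(N)=F([K,L]_{X})$ as subobjects of $F(X)$, and the auxiliary observation above yields $N=[K,L]_{X}$, as required. The only real obstacle is the reflection-of-subobject-equality step, which is handled once and for all by the intersection argument.
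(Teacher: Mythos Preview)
Your proof is correct and follows essentially the same route as the paper's: (i)$\Leftrightarrow$(ii) via Lemma~\ref{lemma: reflection of commutativity the same as reflection of commutator}, (ii)$\Rightarrow$(iii) by picking the normal subobject whose image is the commutator and using reflection to identify it, and (iii)$\Rightarrow$(ii) via conservativity. The paper's proof simply asserts that (iii)$\Rightarrow$(ii) ``easily follows'' from $F$ reflecting isomorphisms, whereas you spell out the underlying mechanism with the intersection argument; this is a welcome clarification rather than a different method.
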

\begin{proof}
The equivalence of (i) and (ii) follows from Lemma~\ref{lemma: reflection of commutativity the same as reflection of commutator}. Since $F$ reflects isomorphisms, it easily follows that (iii)~$\Rightarrow$~(ii). To prove that (ii)~$\Rightarrow$~(iii), let $(k\colon K\to X, l\colon L\to X)$ be a cospan in $\CC$. By the second part of (ii), there exists a normal monomorphism $w\colon  W\to X$ such that $F(w)$ is the Huq commutator of $F(k)$ and $F(l)$. It follows that since $F$ reflects such commutators, $w$ is the commutator of $k$ and $l$ meaning that $F$ preserves commutators of cospans in $\CC$ as required.
\end{proof}

Now we apply this to the situation where $F=\Ker\colon {\Pt_{Z}(\C)\to \C}$ is a kernel functor and $\CC$ is the class of cospans of normal monomorphisms.

\begin{theorem} \label{Theorem (SH) + (NH)}
For a semi-abelian category $\C$, the following are equivalent:
\begin{enumerate}
\item $\C$ satisfies \SH\ {\rm +} \NH;
\item $\C$ satisfies \NH\ and the kernel functors $\Ker\colon \Pt_{Z}(\C)\to \C$ reflect Huq commutators of pairs of normal subobjects;
\item the kernel functors $\Ker\colon \Pt_{Z}(\C)\to \C$ preserve Huq commutators of pairs of normal subobjects;
\item $\C$ satisfies \NH\ and the change of base functors $f^{*}\colon \Pt_{Z}(\C)\to \Pt_{W}(\C)$ of the fibration of points reflect Huq com\-mu\-ta\-tors of pairs of normal subobjects;
\item the change of base functors $f^{*}\colon \Pt_{Z}(\C)\to \Pt_{W}(\C)$ of the fibration of points preserve Huq com\-mu\-ta\-tors of pairs of normal subobjects;
\item for each $Z$ in $\C$ the category $\Pt_{Z}(\C)$ satisfies \SH\ {\rm +} \NH.
\end{enumerate}
\end{theorem}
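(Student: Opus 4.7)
The plan is to apply Lemma~\ref{lemma: NH + SH for abtract functor} first with $F=\Ker\colon \Pt_{Z}(\C)\to \C$ to obtain the equivalences (i)~$\Leftrightarrow$~(ii)~$\Leftrightarrow$~(iii), then to leverage the factorisation $\Ker_{Z}=\Ker_{W}\comp f^{*}$ to extend these to (iv) and (v), and finally to treat (vi) by applying the already established equivalences inside $\Pt_{Z}(\C)$ and specialising to $Z=0$.

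First I would verify the hypotheses of Lemma~\ref{lemma: NH + SH for abtract functor}: in the semi-abelian context the kernel functor is conservative, preserves limits, and preserves regular epimorphisms; and the class $\CC$ of cospans of normal monomorphisms in $\Pt_{Z}(\C)$ is closed under images because the regular image of a normal monomorphism is normal. Writing $\DD$ for the image of $\CC$ under $\Ker$, the first clause of condition~(i) of the lemma, namely that $\Ker$ reflects Huq commutativity of cospans in $\DD$, is, varied over $Z$, precisely the characterisation of \SH\ recalled in Subsection~\ref{sub (SH)} from \cite{BMFVdL, MFVdL3}. The second clause---that Huq commutators of cospans in $\DD$ arise as $\Ker$-images of normal subobjects---coincides, via reformulation~(vii) of Theorem~\ref{Theorem Independence of surrounding object Huq}, with~\NH. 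Hence Lemma~\ref{lemma: NH + SH for abtract functor} yields (i)~$\Leftrightarrow$~(ii)~$\Leftrightarrow$~(iii).

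Next I would establish (iii)~$\Leftrightarrow$~(v). The implication (v)~$\Rightarrow$~(iii) is immediate by taking $f\colon 0\to Z$, for which $f^{*}$ coincides with $\Ker_{Z}$. For (iii)~$\Rightarrow$~(v), given a cospan $(k,l)$ of normal monomorphisms in $\Pt_{Z}(\C)$ with Huq commutator $w$, the limit-preserving functor $f^{*}$ sends commutativity modulo $w$ to commutativity modulo $f^{*}(w)$, giving a comparison mono $[f^{*}(k),f^{*}(l)]\leq f^{*}(w)$ in $\Pt_{W}(\C)$; applying $\Ker_{W}$ and using $\Ker_{W}\comp f^{*}=\Ker_{Z}$ together with (iii) applied to both $\Ker_{Z}$ and $\Ker_{W}$ shows this comparison is sent to an isomorphism, so by conservativity of $\Ker_{W}$ the comparison itself is an isomorphism. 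The equivalence (iii)~$\Leftrightarrow$~(iv) follows by applying Lemma~\ref{lemma: reflection of commutativity the same as reflection of commutator} to the conservative, limit-preserving, regular-epi-preserving functor $f^{*}$, which shows that once the normality clause of \NH\ holds, reflection of Huq commutators is interchangeable with preservation.

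Finally, for (vi) I would invoke the standard fibrational identification $\Pt_{(Y,f,\sigma)}(\Pt_{Z}(\C))\simeq \Pt_{Y}(\C)$, under which the Bourn kernel functor of $\Pt_{Z}(\C)$ at $(Y,f,\sigma)$ corresponds to the change of base functor $\sigma^{*}\colon \Pt_{Y}(\C)\to \Pt_{Z}(\C)$ of the fibration of points of $\C$ along the section $\sigma\colon Z\to Y$. Under this identification, condition (iii) applied to $\Pt_{Z}(\C)$ asserts exactly that these $\sigma^{*}$ preserve Huq commutators of pairs of normal subobjects. Hence condition~(v) in $\C$, which covers all change of base functors and in particular those along sections, yields condition~(iii) for $\Pt_{Z}(\C)$; by the equivalences already established this is \SH~$+$~\NH\ in $\Pt_{Z}(\C)$, i.e.~(vi). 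Conversely, (vi)~$\Rightarrow$~(i) follows by setting $Z=0$, since $\Pt_{0}(\C)\simeq\C$. The main technical point of the argument is the careful identification of the kernel functors of $\Pt_{Z}(\C)$ with the $\sigma^{*}$; once this is set up, the rest is a formal consequence of the abstract Lemma~\ref{lemma: NH + SH for abtract functor}.
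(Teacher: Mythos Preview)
Your approach is essentially the same as the paper's: both proofs use Lemma~\ref{lemma: NH + SH for abtract functor} with $F=\Ker$ for the equivalences among (i)--(iii), the factorisation $\Ker_Z\cong\Ker_W\comp f^*$ to reach (iv) and~(v), and the identification $\Pt_{(A,p,s)}(\Pt_B(\C))\cong\Pt_A(\C)$ (under which the kernel functor of $\Pt_B(\C)$ becomes a change of base $s^*$) for~(vi).

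The one place where your argument does not go through is the step (iii)~$\Leftrightarrow$~(iv). You invoke Lemma~\ref{lemma: reflection of commutativity the same as reflection of commutator} for $f^*$ and claim that ``once the normality clause of \NH\ holds, reflection of Huq commutators is interchangeable with preservation''; but that lemma only says that reflection of Huq \emph{commutativity} is equivalent to reflection of Huq \emph{commutators}---it says nothing about preservation. Even if you intended Lemma~\ref{lemma: NH + SH for abtract functor}, its normality clause is a statement about the specific functor $F=f^*$ (Huq commutators in $\Pt_W(\C)$ of cospans in the image of $f^*$ arising as $f^*$-images of normal subobjects), which is not the same thing as \NH\ for~$\C$; you would need to supply that clause separately. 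The clean fix is to run for (iv) the very argument you already gave for (v), in the other direction: if $f^*(w)$ is the Huq commutator of $f^*(k)$ and $f^*(l)$, apply $\Ker_W$---which preserves such commutators by (iii)---to obtain that $\Ker_Z(w)$ is the Huq commutator of $\Ker_Z(k)$ and $\Ker_Z(l)$, and then use reflection by $\Ker_Z$ (this is (ii), already established) to conclude $w=[k,l]$. That is exactly how the paper phrases it: $f^*$ reflects because $!_W^*$ preserves and $!_Z^*$ reflects.
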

\begin{proof}
As explained above, (i) is equivalent to (ii). The equivalence between (ii) and (iii) follows from Lemma~\ref{lemma: NH + SH for abtract functor}. Next we will show that (ii) + (iii) implies (iv)~+~(v). Let $f\colon {W\to Z}$ be a morphism $\C$. Consider the diagram of induced pullback functors
\[
\xymatrix@=4em{\Pt_{Z}(\C) \ar[r]^-{f^{*}} \ar@/_4ex/[rr]_-{!_{Z}^{*}=\Ker} & \Pt_{W}(\C) \ar[r]^-{!_{W}^{*}=\Ker} & \C}
\]
which commutes (up to natural isomorphism).
It is clear that $f^{*}$ preserves Huq commutators of pairs of normal subobjects because the kernel functor $!_{Z}^{*}$ preserves them and $!_{W}^{*}$ reflects them. On the other hand, $f^{*}$ reflects Huq commutators of pairs of normal subobjects because $!_{W}^{*}$ preserves them and $!_{Z}^{*}$ reflects them. The implications (vi)\implies (i), (v)\implies (iii) and (iv)\implies (ii) are obvious. 
Finally, since there is an isomorphism of categories $\Pt_{(A,p,s)}(\Pt_B(\C)) \cong \Pt_{A}(\C)$ making the diagram
\[
\xymatrix{
\Pt_{(A,p,s)}(\Pt_{B}(\C)) \ar[rr]^-{\cong} \ar[dr]_{\Ker} & & \Pt_A(\C)\ar[dl]^{s^*}\\
& \Pt_B(\C) &
}
\]
commute, it follows that (v) for $\C$ implies (iii) for $\Pt_{B}(\C)$ which then implies (i) for $\Pt_B(\C)$.
\end{proof}


\providecommand{\noopsort}[1]{}
\providecommand{\bysame}{\leavevmode\hbox to3em{\hrulefill}\thinspace}
\providecommand{\MR}{\relax\ifhmode\unskip\space\fi MR }
\providecommand{\MRhref}[2]{%
  \href{http://www.ams.org/mathscinet-getitem?mr=#1}{#2}
}
\providecommand{\href}[2]{#2}

\end{document}